\theoremstyle{plain}
\newtheorem{theorem}{Theorem}
\numberwithin{theorem}{section}
\newtheorem{lemma}{Lemma}
\numberwithin{lemma}{section}
\newtheorem{corollary}{Corollary}
\numberwithin{corollary}{section}
\newtheorem{conjecture}{Conjecture}
\numberwithin{conjecture}{section}
\newtheorem*{conjecture*}{Conjecture}
\title{The edge colorings of $K_{5}$-minor free graphs
\thanks{This work is supported by NSFC (11971270, 11901263, 61802158)}}
\author{Jieru Feng$^{a}$, Yuping Gao$^{b}$\thanks{Corresponding author. \emph{E-mail address:} gaoyp@lzu.edu.cn.}, Jianliang Wu$^{a}$
\\
{\small a. School of Mathematics, Shandong University, Jinan 250100, China}\\
{\small b. School of Mathematics and Statistics, Lanzhou University, Lanzhou 730000, China}}
\date{}
\begin{document}
\baselineskip 0.7cm

\maketitle
\begin{abstract}
In 1965, Vizing proved that every planar graph $G$ with maximum degree $\Delta\geq 8$ is edge $\Delta$-colorable. It is also proved that every planar graph $G$ with maximum degree $\Delta=7$ is edge $\Delta$-colorable by Sanders and Zhao, independently by Zhang. In this paper, we extend the above results by showing that every $K_5$-minor free graph with maximum degree $\Delta$ at least seven is edge $\Delta$-colorable.

\end{abstract}

\section{Introduction}

All graphs considered in this paper are finite, undirected and simple. Let $G$ be a graph. We use $V(G),E(G),\Delta(G)$ and $\delta(G)$ (or simply $V,E,\Delta$ and $\delta$) to denote the vertex set, the edge set, the maximum degree and the minimum degree of $G$, respectively. For a vertex $v\in V(G)$, let $N_{G}(v)=\{u\in V(G)\mid uv\in E(G)\}$ be the set of neighbors of $v$. Furthermore, let $N_G(X)=\bigcup_{u\in X}N_G(u)\backslash X$  for a subset $X\subseteq V(G)$. A $k$-\emph{cycle} is a cycle of length $k$. A 3-cycle is also said to be a \emph{triangle}.

An \emph{edge $k$-coloring} of a graph $G$ is an assignment of $k$ colors $1,2,\cdots,k$ to the edges of $G$ such that no two adjacent edges receive the same color. The minimum integer $k$ such that $G$ admits an edge $k$-coloring is called the \emph{chromatic index} of $G$ and is denoted by $\chi'(G)$. For any graph $G$, it is obviously that $\chi'(G)\geq \Delta(G)$. Vizing \cite{Vizing1964} and Gupta \cite{Gupta1967} independently proved that $\chi'(G)\leq \Delta(G)+1$. This leads to a natural classification of graphs into two classes. A graph is said to be \emph{class} 1 if $\chi'(G)=\Delta(G)$ and of \emph{class} 2 if $\chi'(G)=\Delta(G)+1$.

The problem of deciding whether a graph is class 1 or class 2 is NP-hard, see Holyer \cite{Holyer1981}. It is reasonable to consider the problem for some special classes of graphs, such as planar graphs. In \cite{Vizing1968}, Vizing gave some examples of planar graphs with maximum degree at most five which are of class 2. He also proved that any planar graph with maximum degree at least eight is of class 1 \cite{Vizing1965}. Planar graphs with maximum degree seven are of class 1 are proved by Sanders and Zhao \cite{SZ2001}, independently by Zhang \cite{Zhang2000}. It remains an open problem that any planar graph with maximum degree six is of class 1. This problem is affirmative provided some additional conditions, see  \cite{ZW2018}.

By \emph{contracting} an edge $e$ of a graph $G$, we mean that deleting $e$ from $G$ and then identifying its end-vertices and deleting all multiple edges. A graph $H$ is a \emph{minor} of a graph $G$ if $H$ can be obtained from $G$ by deleting edges, deleting vertices and contracting edges. A graph $G$ is called $H$-\emph{minor free} if $G$ has no minor which is isomorphic to $H$. It is well-known that every planar graph contains neither $K_{5}$-minor nor $K_{3,3}$-minor. Therefore, the family of $K_5$-minor free graphs is a generalization of planar graphs. The goal of this paper is to extend the result from planar graphs in \cite{SZ2001,Zhang2000} to $K_5$-minor free graphs. The main result of this paper is as follows.

\begin{theorem}\label{th1}
Let $G$ be a $K_{5}$-minor free graph with maximum degree $\Delta(G)\geq 7$. Then $\chi'(G) =\Delta(G)$.
\end{theorem}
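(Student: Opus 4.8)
The plan is to argue by contradiction: let $G$ be a counterexample minimizing $|E(G)|$. Then $G$ is connected and $\Delta$-critical, i.e.\ $\chi'(G-e)=\Delta$ for every edge $e$, where $\Delta=\Delta(G)\ge 7$. First I would collect the standard structural consequences of criticality: $\delta(G)\ge 2$, $G$ has no cut-vertex (so $G$ is $2$-connected), and Vizing's Adjacency Lemma together with its usual refinements (bounding, for each vertex, the number of neighbours of small degree in terms of $\Delta$). On the $K_5$-minor-free side I would use two facts: the density bound $|E(H)|\le 3|V(H)|-6$ for every subgraph $H$ with $|V(H)|\ge 3$, and Wagner's structure theorem, by which a $3$-connected $K_5$-minor-free graph is planar or is the Wagner graph $V_8$. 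Since $\Delta(V_8)=3<7$, if $G$ were $3$-connected it would be planar, and then $\chi'(G)=\Delta$ by Sanders--Zhao and by Zhang, a contradiction; hence $G$ must contain a $2$-vertex-cut.

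Next I would exploit the $2$-cut structure. For a $2$-cut $\{x,y\}$ write $G=G_1\cup G_2$ with $V(G_1)\cap V(G_2)=\{x,y\}$; each $G_i$, or $G_i$ with the edge $xy$ added, is again $K_5$-minor-free and has fewer edges than $G$, so it is edge $\Delta$-colorable (using minimality when its maximum degree is at least $7$, and Vizing's bound $\chi'\le\Delta+1$ together with $\Delta\ge 7$ otherwise). Combining the two colorings, after permuting colors so that they agree on $\{x,y\}$ and using criticality to liberate the needed colors, one recovers a $\Delta$-edge-coloring of $G$ except when $d_G(x)+d_G(y)$ is large; this rules out many local configurations and, in particular, pins down the behaviour of small-degree vertices lying on $2$-cuts. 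The remainder of the proof is a discharging argument on the vertices of $G$: assign each vertex $v$ the charge $\mu(v)=d_G(v)-6$, so that $\sum_{v}\mu(v)=2|E(G)|-6|V(G)|\le -12<0$ by the density bound; then redistribute charge from $\Delta$-vertices (and other vertices of degree at least $6$, which carry nonnegative charge) toward the vertices of degree $2,3,4,5$ along edges and short paths, the rules being dictated by Vizing's Adjacency Lemma and the reducible configurations established above. Showing that every vertex ends with nonnegative charge yields $\sum_v\mu(v)\ge 0$, the desired contradiction.

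I expect the main obstacle to be the discharging step, specifically a $\Delta$-vertex adjacent to many vertices of degree $2$ or $3$: such a vertex has charge only $\Delta-6$, yet each $2$-neighbour needs to receive $4$. Overcoming this requires a careful accounting that makes both $\Delta$-endpoints of a $2$-vertex share the cost, uses Vizing's Adjacency Lemma to force an abundance of $\Delta$-vertices around every small vertex, and uses the $K_5$-minor-free structure --- via the density bound applied to well-chosen subgraphs and via the $2$-cut analysis --- to limit how densely low-degree vertices can cluster around a single high-degree vertex. A secondary technical point is to check that cutting at a $2$-cut and re-adding the edge $xy$ genuinely preserves $K_5$-minor-freeness and that the two piece-colorings can always be synchronized with exactly $\Delta$ colors; this is where the hypothesis $\Delta\ge 7$ enters a second time.
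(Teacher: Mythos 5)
Your outline has a genuine error at its pivot point: it is \emph{not} true that a $3$-connected $K_5$-minor-free graph is planar or the Wagner graph. Wagner's theorem only says that $K_5$-minor-free graphs are built from planar graphs and $V_8$ by clique-sums of order at most $3$; a $3$-sum along a triangle can produce a $3$-connected non-planar graph. For instance, $K_{3,3}$ itself is $3$-connected, non-planar, has no $K_5$-minor (it has only $9$ edges), and is not $V_8$; for a high-degree example, glue several planar triangulations along a common triangle $\{a,b,d\}$ to get a $3$-connected non-planar $K_5$-minor-free graph containing $K_{3,3}$. So you cannot reduce to ``planar or has a $2$-cut'': you must also handle triangle separators, and your plan says nothing about $3$-cuts. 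This matters because recombining two edge $\Delta$-colorings across even a $2$-cut is already delicate (you need the color sets at $x$ and at $y$ to be simultaneously disjoint across the two pieces, a condition that can fail when the degrees split badly), and across a $3$-cut it is harder still; you assert this step works ``after permuting colors'' without an argument. The paper avoids this entirely: it never splits and recombines colorings. Instead it proves a purely structural lemma (Lemma 2.3) --- every $K_5$-minor-free graph satisfying the Vizing-type adjacency conditions contains one of three explicit configurations --- by inducting along the $3$-simple tree-decomposition and running a discharging argument (Lemma 2.1) inside a planar part with the (at most three) separator vertices placed on the outer face and treated specially. The configurations are then killed by the Sanders--Zhao reducibility lemmas applied to a critical graph.

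The second gap is the discharging itself. You propose a vertex-only discharging with $\mu(v)=d(v)-6$ driven by Vizing's Adjacency Lemma and the density bound. For $\Delta=7$ this is known not to close: VAL alone is insufficient even for planar graphs of maximum degree $7$ (that case resisted solution until Sanders--Zhao and Zhang introduced new reducible configurations involving triangles, quoted here as Lemmas 3.3--3.5), and the paper's discharging crucially uses Euler's formula with \emph{faces} as charge reservoirs, plus the $(5,5,7)$-face and $(6,7,7)$-face structure, none of which is visible to a vertex-only scheme on an abstract $K_5$-minor-free graph. Your plan correctly identifies where the difficulty lies (a $7$-vertex crowded by $2$- and $3$-neighbours) but does not supply the configurations or rules that resolve it. Finally, note that the paper dispatches $\Delta\geq 8$ in one line by comparing Mader's bound $|E|\leq 3n-6$ with the Miao--Wu lower bound $|E|\geq 3(n+\Delta-8)$ for critical graphs; only $\Delta=7$ requires the heavy machinery, and that is exactly the case your sketch leaves open.
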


The remainder of this paper is organized as follows. In Section 2, we prove several
properties of $K_5$-minor free graphs; and in Section 3, we prove Theorem \ref{th1} based on the results in Section 2.

\section{Structural properties of $K_5$-minor free graphs}

Before proceeding, we introduce some notation. Let $G$ be a planar graph which is embedded in the plane. Denote by $F(G)$ the face set of $G$. For a face $f\in F(G)$, the \emph{degree} $d(f)$ of $f$ is the number of edges incident with it, where each cut-edge is counted twice. A \emph{$k$-face}, \emph{$k^{-}$-face} and \emph{$k^{+}$-face} (resp. \emph{$k$-vertex}, \emph{$k^{-}$-vertex} and \emph{$k^{+}$-vertex}) is a face (resp. vertex) of degree $k$, at most $k$ and at least $k$, respectively.

\begin{lemma}\label{l1}
Let $G$ be a planar graph of the maximum degree $7$ and $Y$$(1 \leq|Y|\leq3)$ be a subset of nonadjacent vertices of $G$ on the same face $f_{0}$ such that $H= G\setminus Y$ has at least one edge. For any vertex $u\in V(H)$, let $N_Y(u)=\{v\in Y\mid uv\in E(G)\}$. Suppose that
\begin{description}
    \item[$(a)$] $d_{G}(x)\geq3$ for any vertex $x\in V(H)$,
    \item[$(b)$] for any edge $xy\in E(H)$, $x$ is adjacent to at least $(8-d_G(y)-|N_Y(x)|)$ $7$-vertices of $G$ other than $y$ and $N_Y(x)$, and
    \item[$(c)$] for any edge $xy\in E(H)$, if $d_G(x)<7$, $d_G(y)<7$ and $d_G(x)+d_G(y)=9$, then every vertex of $N_H(N_H(\{x,y\}))\backslash\{x,y\}$ is a $7$-vertex of $G$.
\end{description}
Then there is a vertex $x\in V(H)$ satisfying at least one of the following conditions.
\begin{description}
  \item[$(1)$] $x$ is adjacent to two vertices $y, z$ of $H$ such that $d_{G}(z)< 16- d_{G}(x)- d_{G}(y)$ and $xz$ is incident with at least $d_{G}(x)+d_{G}(y)-9$ triangles not containing $y$;
  \item[$(2)$] $x$ is adjacent to four vertices $v, w, y, z$ of $H$ such that $d_{G}(w)\leq 5$, $d_{G}(y)= d_{G}(z)=5$ and $vwx, xyz$ are triangles;
  \item[$(3)$] $x$ is adjacent to four vertices $v, w, y, z$ of $H$ such that $d_{G}(v)< 7$, $d_{G}(w)<7$, $d_{G}(y)= d_{G}(z)= 5$ and $xyz$ is a triangle.
\end{description}
\end{lemma}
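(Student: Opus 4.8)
\noindent\emph{Proof strategy.} We argue by contradiction via discharging, so suppose no vertex of $H$ satisfies (1), (2) or (3). View $H$ as a plane graph with the embedding it inherits from $G$; treating the components separately if necessary, we may assume $H$ is connected. Since $Y$ is an independent set lying on the single face $f_{0}$ of $G$, deleting the vertices of $Y$ merges all faces of $G$ incident with a vertex of $Y$ into one face $f^{*}$ of $H$, and every vertex of $N_{G}(Y)\cap V(H)$ lies on the boundary of $f^{*}$. Write $e=\sum_{u\in V(H)}|N_{Y}(u)|$ for the number of edges of $G$ between $V(H)$ and $Y$, so $e\le 3\Delta(G)=21$. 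By (a), $d_{G}(v)\ge 3$ for every $v\in V(H)$, and since $d_{G}(v)=d_{H}(v)+|N_{Y}(v)|$, any $v\in V(H)$ with $d_{H}(v)\le 2$ must be adjacent to $Y$; thus the vertices of small $H$-degree, together with $f^{*}$, form a bounded ``exceptional region'' concentrated around $f^{*}$.

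Assign each $v\in V(H)$ the initial charge $\mathrm{ch}(v)=d_{G}(v)-6$, each face $g\neq f^{*}$ the charge $\mathrm{ch}(g)=2d_{H}(g)-6$, and $f^{*}$ the charge $\mathrm{ch}(f^{*})=2d_{H}(f^{*})-6-e$; by Euler's formula the total charge equals $-12$. With this choice a $7$-vertex carries $+1$, a $6$-vertex $0$, a $k$-vertex with $k\le 5$ carries $k-6<0$, every $4^{+}$-face carries at least $2$, and every $3$-face carries $0$, so all negative charge sits on the $5^{-}$-vertices of $G$ lying in $V(H)$ and possibly on $f^{*}$, whose deficit is at most $e$. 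I would use discharging rules of the following type: each $7$-vertex distributes its surplus among the incident $5^{-}$-vertices (and the incident face $f^{*}$), giving a larger share to a $5^{-}$-neighbour with which it shares one or two triangles; each $4^{+}$-face hands its surplus to the incident $5^{-}$-vertices; and $f^{*}$ is additionally replenished by the $7$-vertices on its boundary, which by (b) are abundant. Since no $7$-vertex and no face is pushed below $0$, after discharging all of them are nonnegative and it remains only to show the same for each $5^{-}$-vertex of $V(H)$; that would make the total $\ge 0$, contradicting its value $-12$.

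This last verification is the heart of the argument. One proceeds by cases on $d_{G}(x)\in\{3,4,5\}$ and on the triangles of $H$ through $x$, and uses (a), (b), (c) together with the absence of (1), (2), (3) to lower-bound the charge entering $x$. The key mechanisms: by (b), a neighbour $y$ of $x$ with small $d_{G}(y)$ forces $x$ to have a prescribed number of $7$-neighbours besides $y$ and $N_{Y}(x)$, and these pay for $x$; if $x$ lies in two triangles, avoiding (2) and (3) prevents its triangle-partners and remaining neighbours from being mostly $6^{-}$-vertices, once more forcing $7$-neighbours; and if an edge $xz$ at $x$ lies in many triangles with $d_{G}(x)+d_{G}(z)$ large, avoiding (1) bounds both $d_{G}(z)$ and the number of such triangles, which is exactly the control needed on how much charge $x$ must draw along $xz$. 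Condition (c) enters in the tight cases $d_{G}(x)+d_{G}(y)=9$, where a $5^{-}$-vertex would otherwise be starved. I expect the principal difficulties to be (i) the triangle-rich subcases for $3$- and $4$-vertices, where one must check that the $7$-neighbours supplied by (b) and (c) are distinct and are not being claimed simultaneously by several needy vertices, so that the contributions really add up; and (ii) the accounting for $f^{*}$ and the $O(1)$ vertices adjacent to $Y$, which carry an extra deficit of size up to $e$ that must be absorbed by the $7$-vertices guaranteed on $\partial f^{*}$.
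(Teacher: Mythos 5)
Your setup is sound as far as it goes: the charge normalization on $H$ with the outer face $f^{*}$ absorbing the correction term $-e$ does sum to $-12$, and you have correctly identified that the contradiction must come from a discharging argument in which (b), (c) and the failure of (1)--(3) force enough $7$-neighbours near every deficient vertex. But the proposal stops exactly where the proof begins. You never write down the discharging rules, and the entire verification that every element ends with nonnegative charge is announced (``one proceeds by cases on $d_{G}(x)\in\{3,4,5\}$\dots'') rather than performed. That verification is the whole content of the lemma: it requires carefully calibrated fractional transfers (amounts such as $\frac15$, $\frac14$, $\frac13$, $\frac25$, $\frac35$, $\frac23$, chosen according to whether an edge lies on two $3$-faces and whether an incident triangle is a $(5,5,7)$- or $(6,7,7)$-face), and, crucially, a separate analysis of the $6$- and $7$-vertices, which are net donors and can themselves be driven negative if the rules are too generous --- a failure mode your sketch does not address at all. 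Conditions (1)--(3) and hypothesis (c) are invoked precisely inside those donor cases, so without the explicit rules there is no way to check that the argument closes.

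A secondary but genuine concern is your treatment of the boundary. You delete $Y$, merge its incident faces into $f^{*}$, and give $f^{*}$ charge $2d_{H}(f^{*})-6-e$, hoping that the $7$-vertices on its boundary can cover a deficit of up to $e\le 21$; you flag this as a difficulty but do not resolve it, and it is not clear it can be resolved locally. The paper avoids the issue entirely by keeping $Y$ and $f_{0}$ inside the plane graph $G$ and inflating the charge of $f_{0}$ to $2d(f_{0})+6$, so that the total charge is $0$; then $f_{0}$ sends $6$ to each vertex of $Y$ and $1$ or $2$ to the other boundary vertices, and each $y\in Y$ forwards $1$ to each neighbour, which makes every vertex near the boundary comfortably nonnegative with no global bookkeeping. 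The price of that normalization is that one must finally exhibit an element of strictly positive charge to contradict the zero total, which the paper does in a short closing argument. Either normalization could in principle be made to work, but yours adds an unproved claim (that the boundary deficit is absorbable) on top of the missing case analysis.
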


\begin{proof}
The proof is carried out by contradiction. Let $G$ be a counterexample to the lemma. By Euler's formula $|V|-|E|+|F|=2$, we have that
 $$\sum\limits_{v\in V(G)}(d(v)-6)+\sum\limits_{f\in F(G)}(2d(f)-6)= -12. $$
 That is
 $$ \sum\limits_{v\in V(G)}(d(v)-6)+\sum\limits_{f\in F(G)\backslash f_0}(2d(f)-6)+(2d(f_0)+6)=0.$$

We define $ch$ to be the {\em initial charge} by letting $ch(x)=d(x)-6$ if $x\in V(G)$, $ch(x)=2d(x)-6$ if $x\in F(G)\backslash f_0$ and $ch(x)=2d(x)+6$ if $x=f_0$. Let $VF(G)=V(G)\cup F(G)$. Thus, we have

$$\sum_{x\in VF(G)}ch(x)=0.$$

Denote by $X=V(H)$.  We define a \emph{hi-vertex} of $G$ to be a $7$-vertex in $X$ or a vertex in $Y$. For a vertex $v\in X$, let $N_X(v)=\{u\in X\mid uv\in E(G)\}$ and $\delta_X(v)=\min\{d_G(u)\mid u\in N_X(v)\}$. Thus $(b)$ is equivalent to that
\begin{description}
    \item[$(d)$]{\it for any edge $xy\in E(H)$, $x$ is adjacent to at least $8- d_G(y)$ hi-vertices other than $y$.}
\end{description}
Now we define the discharging rules as follows.

{\it
\begin{description}
\item[$\bf R1.$] Let $f$ be a face of $G$ and $x$ be a vertex incident with $f$.
  \begin{description}
    \item[$\bf R1.1.$] Suppose that $f=f_0$. If $x\in Y$, then $f_0$ sends $6$ to $x$; Otherwise let $Z$ be the set of vertices adjacent to $x$ and incident with $f_0$. If $|Z\backslash Y|=1$, then $f_0$ sends $1$ to $x$. If $|Z\backslash Y|=2$, then $f_0$ sends $2$ to $x$.
    \item[$\bf R1.2.$] Suppose that $f(\neq f_0)$ is a $4^+$-face. Firstly, $f$ sends $\frac 12$ to each of its incident vertices. Then for each incident hi-vertex $v$ of $f$, $v$ sends $\frac 14$ to each $6^-$-vertex $u\in N_X(v)$ if $uv$ is incident with $f$$($if exists$)$.
  \end{description}
\item[$\bf R2.$] Let $x\in X$.
  \begin{description}
  \item[$\bf R2.1.$] If $x$ is adjacent to a vertex $y\in Y$, $y$ sends $1$ to $x$.
  \item[$\bf R2.2.$] Suppose that $d_G(x)=3$. For each $6$-vertex $y\in N_X(x)$, $y$ sends $1$ to $x$. For each $7$-vertex $y\in N_X(x)$, if $xy$ is incident with two $3$-faces, then $y$ sends $1$ to $x$, otherwise $y$ sends $\frac{1}{2}$ to $x$.
  \item[$\bf R2.3.$] Suppose that $d_G(x)=4$. If $x$ is adjacent to a $5$-vertex $z\in X$, then for each $7$-vertex $y\in N_X(x)$, $y$ sends $\frac{2}{3}$ to $x$, otherwise for each $6$-vertex $y\in N_X(x)$, $y$ sends $\frac{2}{5}$ to $x$ and for each $7$-vertex $y\in N_X(x)$, if $xy$ is incident with two $3$-faces, then $y$ sends $\frac{3}{5}$ to $x$, otherwise $y$ sends $\frac{1}{5}$ to $x$.
  \item[$\bf R2.4.$] Suppose that $d_G(x)=5$. If $x$ is adjacent to a $4$-vertex $z\in X$, then for each $7$-vertex $y\in N_X(x)$, $y$ sends
  $\frac{1}{3}$ to $x$, otherwise for each $6^+$-vertex $y \in N_X(x)$ such that $xy$ is incident with two $3$-faces,
  if $xy$ is incident with exactly one $(5, 5, 7)$-face, then $y$ sends $\frac{2}{5}$ to $x$, otherwise $y$ sends $\frac{1}{5}$ to $x$.
  \item[$\bf R2.5.$] Suppose that $d_G(x)=6$. If $x$ is adjacent to a $3$-vertex $z\in X$, then for each $7$-vertex $y\in N_X(x)\backslash N_X(z)$, $y$ sends $\frac13$ to $x$, 
       otherwise for each $7$-vertex $y\in N_X(x)$ such that $xy$ is incident with two $3$-faces,  if $xy$ is not incident with two $(6, 7, 7)$-faces, then $y$ send $\frac{1}{5}$ to $x$.
  \end{description}
\end{description}}

Let $ch'(x)$ be the new charge according to the above discharging rules for each $x\in VF(G)$.  Since our rules only move charges around and do not affect the sum, we also have that

$$\sum_{x\in VF(G)}ch'(x)=\sum_{x\in VF(G)}ch(x)= 0.$$

In the following,  we shall show that $ch'(x)\geq 0$ for each $x\in VF(G)$ and $\sum\limits_{x\in VF(G)}ch'(x)>0$ to obtain a contradiction.

Let $f$ be a face of $G$. Suppose that $f=f_0$. R1.1 is equivalent to that for any $y\in Y$, there is a vertex incident with $f_0$ receives nothing from $f_0$. So $ch'(f_0) =ch(f_0)-|Y|\times6-(d(f_0)-2|Y|)\times2\geq 0$. Suppose that $f\not=f_0$. If $d(f)=3$, then $ch'(f)=ch(f)=2d(f)-6=0$; Otherwise $d(f)\geq 4$ and $ch'(f)\geq ch(f)-\frac{1}{2}\times d(f)\geq0$ by R1.2.

If $v\in Y$, then $ch'(v)\leq ch(v)+6-d(v)=0$ by R1.1 and R2.1. So in the following, assume that $v\in X$. We consider the following two cases.

\vspace{3mm}
\noindent
\textbf{Case 1.} $v$ is not incident with $f_0$.

\vspace{3mm}
\noindent
\textbf{Subcase 1.1.} $d_G(v)=3$.

\vspace{3mm}
Then $\delta_X(v)\geq 6$ and at most one vertex in $N_X(v)$ is a $6$-vertex by $(b)$. If $v$ is incident with three $3$-faces, then $ch'(v)\geq ch(v)+1\times3\geq0$ by R2.1 and R2.2. Suppose that $v$ is incident with two 3-faces and a $4^+$-face $f$. If there is a $6$-vertex $u\in N_X(v)$ and $uv$ is incident with $f$, then $v$ receives $\frac 34$ from $f$ by R1.2, $1$ from $u$, at least $\frac 12$ from its adjacent hi-vertex incident with $f$, 1 from another adjacent hi-vertex which is not incident with $f$ by R2.1 and R2.2, and it follows that $ch'(v)\geq ch(v)+ 1+ 1+ \frac 34+ \frac 12> 0$; Otherwise $v$ receives 1 from $f$  by R1.2, at least $\frac 12$ from each of its adjacent hi-vertices incident with $f$, 1 from another adjacent vertex which is not incident with $f$  by R2.1 and R2.2, and it follows that $ch'(v)\geq ch(v)+ 1+ 1+ \frac 12\times2= 0$. If $v$ is incident with at least two $4^+$-faces, then $v$ receives totally at least $\frac 34\times2$ from its incident faces by R1.2, $v$ receives totally at least $\frac 12\times 3$ from its adjacent vertices by R2.1 and R2.2, and it follows that $ch'(v)\geq ch(v)+ \frac 34\times2+\frac 32 = 0$.

\vspace{3mm}
\noindent
\textbf{Subcase 1.2.} $d_G(v)=4$.

\vspace{3mm}
Then $\delta_X(v)\geq 5$ by $(b)$. If there is a $5$-vertex in $N_X(v)$, then $v$ is adjacent to three hi-vertices of $G$ by $(d)$, and then $ch'(v)\geq ch(v)+ \frac{2}{3} \times (3-|N_Y(v)|)+|N_Y(v)|\geq0$ by R2.1 and R2.3; Otherwise $\delta_X(v)\geq 6$ and $v$ is adjacent to at least two hi-vertices of $G$. For each adjacent hi-vertex $u$ of $v$, if $uv$ is incident with two 3-faces, then $u$ sends at least $\frac 35$ to $v$ by R2.1 and R2.3; Otherwise $u$ sends at least $\frac 15$ to $v$ by R2.1 and R2.3. Let $f$ be some $4^+$-face incident with $uv$. By R1.2, $u$ also sends $\frac 14$ to $v$ and $f$ sends $\frac 12$ to $v$. We can split a half of the $\frac 12$ to $u$, that is, it is thought that $u$ sends $\frac 14+\frac 14$ to $v$ by R1.2. Note that $\frac 35<\frac 15+\frac 12$. Since each $6$-vertex in $N_X(v)$ sends $\frac 25$ to $v$ by R2.3, $ch'(v)\geq ch(v)+ \frac{3}{5}\times 2+ \frac{2}{5} \times 2 \geq0$.

\vspace{3mm}
\noindent
\textbf{Subcase 1.3.} $d_G(v)=5$.

\vspace{3mm}
Then $\delta_X(v)\geq 4$ by $(b)$. If $N_Y(v)\neq\emptyset$, then $ch'(v)\geq ch(v)+1= 0$ by R2.1. So assume $N_Y(v)=\emptyset$. If there is a $4$-vertex in $N_X(v)$, then $v$ is adjacent to four 7-vertices by $(d)$ and it follows from R2.4 that $ch'(v)\geq ch(v)+ \frac{1}{3} \times4 >0$. If $\delta_X(v)\geq 6$, then $ch'(v)\geq ch(v)+ \frac{1}{5}\times 5 = 0$ by R1.2 and R2.4. Suppose that $\delta_X(v)=5$. Then $v$ is adjacent to at least three 7-vertices by $(d)$. If $v$ is incident with a $4^{+}$-face, $ch'(v)\geq ch(v)+\frac{1}{5}\times3+\frac{1}{2}>0$ by R1.2 and R2.4; Otherwise, $v$ is incident with five $3$-faces, in this case, there is only one $5$-vertex $u$ in $N_X(v)$ by (1) and $uv$ is incident with a $(5,5,7)$-face, so $ch'(v)\geq ch(v)+ \frac{1}{5} \times 3+ \frac{2}{5}= 0$ by R2.4.

\vspace{3mm}
\noindent
\textbf{Subcase 1.4.} $d_G(v)=6$.

\vspace{3mm}
Suppose that there is a $3$-vertex $u\in N_X(v)$. Then $v$ is adjacent to at least five hi-vertices of $G$ by $(d)$ and at least three of these hi-vertices are not adjacent to $u$. If $N_Y(v)= \emptyset$, then $ch'(v)\geq ch(v)+\frac 13\times (5-2)-1= 0$ by R2.2 and R2.5; Otherwise $ch'(v)\geq ch(v)+\frac 13\times (5-2-|N_Y(v)|)+ |N_Y(v)|-1> 0$ by R2.1.  If $\delta_X(v)\geq 6$, then $v$ sends nothing out and it follows that $ch'(v)\geq ch(v)=0$. Now  we consider the case $4\leq \delta_X(v)\leq 5$.

Let $N_G(v)=\{v_1, v_2,\cdots, v_6\}$ such that $d_G(v_1)=\delta_X(v)$, $vv_{i}$ and $vv_{i+1}$ are incident with the face $f_i$ for any $1\leq i \leq5$ and $vv_{6}$ and $vv_{1}$ are incident with the face $f_6$. Suppose that $d_G(v_1)=4$. Then $v$ is adjacent to at least four hi-vertices by $(d)$ and $v$ sends at most $\frac{2}{5}$ to each adjacent $5^{-}$-vertex by R2.3 and R2.4. If $N_Y(v)\neq\emptyset$, then $ch'(v)\geq ch(v)-\frac 25\times2+1>0$ by R2.1; Otherwise we have $N_Y(v)=\emptyset$. At this time, if there exists a $5^-$-vertex $v_j\in N_X(v)\backslash \{v_1\}$ $(2\leq j\leq6)$, then $vv_j$ is incident with two $4^+$-faces by (d) and (1), and it follows that $v$ sends at most $\frac 25\times 2$ to $v_1$ and $v_j$, but $v$ receives at least $\frac 12\times 2$ by R1.2 from its incident faces, so $ch'(v)\geq ch(v)-\frac 45+1>0$; Otherwise $v$ just sends $\frac 25$ to $v_1$. If some $f_i(1\leq i\leq 6)$ is a $4^+$-face, then $ch'(v)\geq ch'(v)-\frac 25+\frac 12>0$; Otherwise $f_1, f_2,...,f_6$ are 3-faces. Let $j=\min\{i|\, d_G(v_i)=7, 2\leq i\leq 6\}$ and $k=\max\{i|\, d_G(v_i)=7, 2\leq i\leq 6\}$. Then $f_{j-1}$ and $f_k$ are not $(6,7,7)$-faces. Since $v$ is adjacent to at least four hi-vertices, $j<k-1$. By R2.5, $v_{j}$ sends $\frac 15$ to $v$, $v_k$ sends $\frac 15$ to $v$ and it follows that $ch'(v)\geq ch'(v)-\frac 25+\frac 15\times 2=0$.

Suppose that $d_G(v_{1})=5$. Let $N_5=\{d_G(u)=5|\, u\in N_X(v)\}$. Then $1\leq |N_5|\leq 3$ by $(d)$ and $v$ sends at most $\frac{1}{5}$ to each vertex of $N_5$ by R2.4. If $N_Y(v)\neq\emptyset$, then $ch'(v)\geq ch(v)-\frac 15\times3+1>0$ by R2.1; Otherwise $N_Y(v)=\emptyset$. If $|N_5|=1$, then $f_k$ is not a $(6,7,7)$-face, where $k=\max\{i|\, d_G(v_i)=7, 2\leq i\leq 6\}$, and it follows that $ch'(v)\geq ch'(v)-\frac 15+\frac 15=0$ by R1.2 and R2.5. Suppose $|N_5|=2$. If some $f_i(1\leq i\leq 6)$ is a $4^+$-face, then $ch'(v)\geq ch'(v)-\frac 15\times 2+\frac 12>0$ by R1.2; Otherwise $f_1, f_2,...,f_6$ are 3-faces. Let $j=\min\{i|\, d_G(v_i)=7, 2\leq i\leq 6\}$ and $k=\max\{i|\, d_G(v_i)=7, 2\leq i\leq 6\}$. Then $v_{j}$ and $v_{k}$ send $\frac 15\times 2$ to $v$ and it follows that $ch'(v)\geq ch'(v)-(\frac 15-\frac 15)\times 2=0$. Suppose $|N_5|=3$. If there is a $7$-vertex in $N_X(v)$ sends no charge to $v$, then $f_1$ is a $4^+$-face by (3), and $v$ receives at least $\frac 15$ from each of another two 7-vertices by R1.2 and R2.5, it follows that $ch'(v)\geq ch(v)-\frac15\times3+\frac15\times2+\frac12>0$; Otherwise there are three 7-vertices in $N_X(v)$ each of which sends at least $\frac 15$ to $v$ by R1.2 and R2.5, it follows that $ch'(v)\geq ch(v)-(\frac 15+\frac 15)\times 3=0$.

\vspace{3mm}
\noindent
\textbf{Subcase 1.5.} $d_G(v)=7$.

\vspace{3mm}
If $\delta_X(v)=7$, then $ch'(v)\geq ch(v)>0$. Suppose that $\delta_X(v)=6$. If $v$ sends $\frac{1}{3}$ to a $6$-vertex $u$ of $G$, then there is a $3$-vertex $w\in N_X(u)$ and it follows from $(c)$ and $(d)$ that $v$ is adjacent to six hi-vertices and then $ch'(v) \geq ch(v)- \frac{1}{3}>0$ by R2.5; Otherwise each $6$-vertex in $N_X(v)$ receives at most $\frac{1}{5}$ from $v$ and $ch'(v)\geq ch(v)-\frac{1}{5}\times5=0$.

Suppose that $\delta_X(v)=5$. If $v$ sends $\frac{1}{3}$ to a $5$-vertex $u$, then there is a $4$-vertex $w\in N_X(u)$ and it follows from $(c)$ that every neighbor of $v$ except $u$ and $w$ is a hi-vertex, and then $ch'(v) \geq ch(v)- \frac{1}{3}- \frac{2}{3}=0$ by R2.3 and R2.4. If $N_Y(v)\neq\emptyset$, then $v$ sends at most $\frac 25$ to  each $6^-$-vertices in $N_X(v)$ by R2.4 and R2.5, and then $ch'(v)\geq ch(v)+1-\frac 25\times4>0$. So $N_Y(v)=\emptyset$. Suppose that $v$ sends $\frac{2}{5}$ to a $5$-vertex of $G$. Then $v$ is incident with a $(5,5,7)$-face $(u, w, v)$ by R2.4, and it follows from (3) that there are at most one $6^{-}$-vertex in $N_X(v)\backslash\{u,w\}$.  If all vertices in $N_X(v) \backslash \{u, w\}$ are 7-vertices, then  $ch'(v) \geq ch(v)- \frac{2}{5} \times 2>0$; Otherwise $N_X(v)\backslash \{u, w\}$ has a $6^-$-vertex which receives at most $\frac{1}{5}$ from $v$, so $ch'(v) \geq ch(v)- \frac{2}{5} \times 2- \frac{1}{5} \geq0$.  The final case is that each $6^{-}$-vertex in $N_X(v)$ receives at most $\frac{1}{5}$ from $v$ and $ch'(v)\geq ch(v)-\frac{1}{5}\times4>0$.

Suppose that $\delta_X(v)=4$. Then $v$ is adjacent to four hi-vertices by $(d)$. If $v$ sends $\frac{2}{3}$ to a $4$-vertex $u$, then there is a $5$-vertex $w\in N_X(u)$ and it follows from (c) that every neighbor of $v$ except $u$ and $w$ is a hi-vertex, and then $ch'(v) \geq ch(v)- \frac{1}{3}- \frac{2}{3}=0$ by R2.3 and R2.4.  If $N_Y(v)\neq\emptyset$, then $v$ sends at most $\frac 35$ to  each $6^-$-vertices in $N_X(v)$ by R2.3, R2.4 and R2.5, so $ch'(v)\geq ch(v)+1-\frac 35\times3>0$; Otherwise $N_Y(v)=\emptyset$. Suppose that $v$ sends $\frac{3}{5}$ to a $4$-vertex $u$. Then there is no 4-vertex in $N_X(v)\backslash u$  by $(b)$ and $(1)$, no vertex receives $\frac{2}{5}$ from $v$ by R2.4 and $(2)$, and it follows that $ch'(v)\geq ch(v)-\frac{3}{5}-\frac{1}{5}\times2\geq0$. If $v$ sends at most $\frac{1}{5}$ to each $4$-neighbor of $v$, then $ch'(v)\geq ch(v)- \frac{1}{5}- \frac{2}{5}\times2\geq0$ by R2.4 and R2.5.

Suppose that $\delta_X(v)=3$. Then $v$ is adjacent to five hi-vertices by $(d)$. If there two $3$-vertices $u_1,u_2$ in $N_X(v)$, then $vu_1$ and $vu_2$ are incident with no 3-faces by $(1)$ and it follows that $v$ sends at most $\frac{1}{2}\times2$ to $u_1$ and $u_2$ by R2.2; Otherwise, let $u\in N_X(v)$ be the unique 3-vertex. If $v$ sends $1$ to it from R2.2, then $v$ sends no other charge (since by (1), there is no 4-vertex in $N_X(v)$ and if $N_X(v)$ contains a 5-vertex $u$ then $uv$ is incident with at least one $4^+$-face); Otherwise $v$ sends at most $\frac{1}{2}$ to each of the two neighbors of $v$ of degree at most six. In either case, $v$ sends out at most 1. Thus, if $N_Y(v)=\emptyset$, then $ch'(v)\geq ch(v)-1=0$; Otherwise $ch'(v)\geq ch(v)-1 +|N_Y(v)|>0$.

\vspace{3mm}
\noindent
\textbf{Case 2.} $v$ is incident with $f_0$.

\vspace{2mm}
Let $Z$ be the set of vertices adjacent to $v$ and incident with $f_0$. Since $d_G(v)\geq 3$, $|Z|\geq 2$. If $|Z\cap Y|\geq 2$, then $v$ receives at least 2 totally from $N_Y(v)$. If $|Z\backslash Y|=1$, then $f_0$ sends $1$ to $v$ and $N_Y(v)$ sends 1 to $v$. If $|Z\backslash Y|=2$, then $f_0$ sends $2$ to $x$. So $v$ receives at least 2 totally from $f_0$ and $N_Y(v)$.

\vspace{3mm}
\noindent
\textbf{Subcase 2.1.} $d_G(v)=3$.

\vspace{2mm}
Then $\delta_X(v)\geq 6$ and there is at most one $6$-vertex in $N_X(v)$ by $(b)$. If $|N_Y(v)|=3$, then $v$ is incident with a $4^+$-face and it follows that $ch'(v)\geq ch(v)+3\times 1 +\frac 12> 0$ by R1.2 and R2.1. Suppose that $|N_Y(v)|=2$. Let $\{u\}=N_X(v)$. If $uv$ is incident with $f_0$, then $v$ receives at least 1 from $f_0$ by R1.1, at least $\frac 12$ from $u$ by R2.2  and it follows that $ch'(v)\geq ch(v)+2 +1 +\frac 12> 0$; Otherwise, if $uv$ is incident with two 3-faces, then $v$ receives 1 from $u$ by R2.2 and it follow that $ch'(v)\geq ch(v)+2 +1= 0$; Otherwise, $v$ receives at least $\frac 12$ from $u$ by R2.2 and at least $\frac 12+\frac 14$ from a $4^+$-face incident with $uv$ by R1.2, so $ch'(v)\geq ch(v)+2 +\frac 14 +1> 0$. Suppose that $|N_Y(v)|=1$. Let $\{u\}=N_Y(v)$, $\{x,y\}=N_X(v)$ and $vx$ be incident with $f_0$. If $vy$ is also incident with $f_0$, then $v$ receives 2 from $f_0$, 1 from $u$, $\frac 12$ from $x$ and it follows that $ch'(v)\geq ch(v)+2 +1+ \frac 12> 0$; Otherwise $v$ receives 1 from $f_0$ by R1.1, receives 1 from $u$ and receive at least $\frac 12$ from $x$. If $vy$ is incident with two 3-face, then $v$ receives $1$ from $y$ and it follows that $ch'(v)\geq ch(v)+ 1\times 3+ \frac 12> 0$; Otherwise $v$ receives $\frac 12$ from $y$, $\frac 12$ from a $4^+$face incident with $vy$ and it follows that $ch'(v)\geq ch(v)+ \frac 12\times 3+ 1\times 2> 0$. Suppose that $|N_Y(v)|=0$. Let $u\in N_X(v)$ such that $uv$ is not incident with $f_0$. Then $v$ receives 2 from $f_0$ by R1.1, receives totally at least $2$ from its neighbors and faces incident with $uv$ by the similar argument as above, so $ch'(v)\geq ch(v)+ 2+ 2> 0$.

\vspace{3mm}
\noindent
\textbf{Subcase 2.2.} $d_G(v)=4$.

\vspace{2mm}
If $|N_Y(v)|=3$, then $ch'(v)\geq ch(v)+3> 0$; Otherwise $|N_X(v)|\geq 2$. If there is a $5$-vertex in $N_X(v)$, then $v$ is adjacent to at least one 7-vertex of $G$ by $(b)$, and it follows from R2.3 that $ch'(v)\geq ch(v)+ 2+ \frac{2}{3}>0$; Otherwise $v$ receives at least $\frac 15$ from $N_X(v)$ by R1.2 and R2.3, then $ch'(v)\geq ch(v)+ 2+ \frac{1}{5}>0$.

\vspace{3mm}
\noindent
\textbf{Subcase 2.3.} $5\leq d_G(v)\leq 7$.

\vspace{2mm}
If $d_G(v)=5$, then $ch'(v)\geq ch(v)+ 2>0$; Otherwise, by the similar argument as Subcase 1.4-1.5, $v$ sends out less than $2$, so $ch'(v)> ch(v)+ 2- 2>0$.

\vspace{3mm}
Till now, we have checked that $ch'(x)\geq 0$ for any element $x\in VF(G)$. Now we begin to find a vertex or a face $x\in VF(G)$ such that $ch(x) > 0$. If $|Y|\leq2$, then $ch'(f_0)>0$. So we assume $|Y|=3$. Let $v\in X$ be a vertex incident with $f_0$. According to Case 2, $ch'(v)=0$ if and only if $d_G(v)=3$, $|N_Y(v)|=2$ and the edge not incident with $f_0$ is incident with two 3-faces. Let $\{u\}=N_X(v)$. Then $d_G(u)\geq 6$ by $(b)$ and it follows from  Subcase 1.4,1.5 and 2.3 that $ch'(u)>0$.

Hence we complete the proof of the lemma.
\end{proof}

Let $G$ be a connected graph, $T$ be a tree, and $ \mathcal{F}=\{V_t\subseteq V(G):t\in V(T)\}$ be a family of subsets of $V(G)$. The ordered set $(T, \mathcal{F})$ is called a $tree$-$decomposition$ of $G$ if it satisfies the following conditions:

(T1) $V(G)=\cup_{t\in V(T)}V_{t}$;

(T2) for any edge $e\in E(G)$, there exists a vertex $t\in V(T)$ such that the two end vertices of $e$ are included in $V_t$;

(T3) if $t_1,t_2,t_3\in V(T)$ and $t_2$ is on the $(t_1,t_3)$-path of $T$, then $V_{t_1}\cap V_{t_3}\subset V_{t_2}$.

For any adjacent vertices $s$ and $t$ in $T$, $V_s\cap V_t$ form a vertex cut of $G$, called as a $separate$ $set$ of the tree decomposition. The graph $G_t=G[V_t]$ for any $t\in V(T)$ is called a $part$ of the tree-decomposition.  If the  induced subgraph of any separate set of the tree-decomposition is a complete graph, the tree decomposition is called  $simple$. If any separate set of a simple tree-decomposition has at most $k$ vertices, the tree decomposition is called $k$-$simple$.

\begin{lemma}{\rm \cite{W1937}}\label{K5}
Let $G$ be an edge-maximal graph without a $K_5$ minor. If $|V(G)|\geq 4$, then $G$ has a $3$-simple tree-decomposition $(T, \mathcal{F})$ such that each part is  a planar triangulation or the Wagner graph $W$$($see Figure $\ref{f})$.
\end{lemma}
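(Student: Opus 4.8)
This is the classical structure theorem of Wagner, and the plan is to reconstruct its proof by induction on $|V(G)|$, peeling $G$ apart along separating cliques of size at most three. The whole argument is driven by one observation: since $K_5$ is $3$-connected, a $K_5$ minor cannot be split across a vertex cut of size at most two. Concretely, if $G=G_1\cup G_2$ with $G_1\cap G_2=G[X]$ and $|X|\le 2$, then $G$ has a $K_5$ minor if and only if $G_1+xy$ or $G_2+xy$ does, where $xy$ is the (possibly already present) pair in $X$. In a \emph{minimum} separator every vertex has a neighbour in every component of $G-X$, so a path through the other side shows that $G_1+xy$ and $G_2+xy$ are themselves minors of $G$; hence if such a minimum separator $X$ were not a clique, $G+xy$ would still have no $K_5$ minor, contradicting edge-maximality. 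So every cut vertex is a clique and every $2$-separator is an edge. The same bookkeeping shows that if $G=G_1\cup_S G_2$ is a clique-sum along a clique $S$ with $|S|\le 3$ and each $|V(G_i)|<|V(G)|$, then each $G_i$ is again edge-maximal with no $K_5$ minor.

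The induction itself is then routine. If $|V(G)|=4$ then $G=K_4$, a planar triangulation. If $|V(G)|\ge 5$ and $G$ has a separating clique $S$ with $|S|\le 3$ — a cut vertex, a separating edge, or a separating triangle — write $G=G_1\cup_S G_2$ as above; by the induction hypothesis each $G_i$ has a $3$-simple tree-decomposition $(T_i,\mathcal F_i)$ all of whose parts are planar triangulations (possibly degenerate ones such as $K_1,K_2,K_3$) or the Wagner graph $W$. Because $S$ is a clique of $G_i$, the Helly property for subtrees of a tree furnishes a node $t_i\in T_i$ with $S\subseteq V_{t_i}$; joining $T_1$ and $T_2$ by a new edge $t_1t_2$ yields a tree-decomposition of $G$ whose new separate set is exactly $S$ (a clique with at most three vertices), whose remaining separate sets and whose parts $G[V_t]=G_i[V_t]$ are inherited unchanged, and which is therefore $3$-simple of the required form.

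It remains to treat the leaves of the recursion, that is, a $3$-connected edge-maximal $K_5$-minor-free graph $G$ with no separating triangle, and to show such a $G$ is a planar triangulation or $W$. If $\kappa(G)\ge 4$ I would invoke the classical lemma that a $4$-connected graph with no $K_5$ minor is planar (in a $4$-connected graph one upgrades a Kuratowski $K_{3,3}$-subdivision to a $K_5$ minor); an edge-maximal planar graph is a triangulation, and we are done. If $\kappa(G)=3$, fix a minimum separator $\{u,v,w\}$, which is not a triangle, say $uv\notin E(G)$, and apply the mechanism of the first paragraph to $X=\{u,v,w\}$: edge-maximality forces $G+uv$ to contain a $K_5$ minor, but if one side of this cut contained three internally disjoint paths joining $u,v,w$ in pairs we could contract them to promote $\{u,v,w\}$ to a triangle and pull the $K_5$ minor back into $G$; so at least one side must be too thin between $u,v,w$ to do this. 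Threading this restriction through — over all non-adjacent pairs in all minimum $3$-separators, together with $3$-connectedness and the absence of separating triangles (which in a planar triangulation would exist, so $G$ is non-planar) — should pin $G$ down to $W$.

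Everything but the last paragraph is bookkeeping; I expect the crux to be precisely that classification: a $3$-connected, edge-maximal, $K_5$-minor-free graph with no separating triangle is a planar triangulation or the Wagner graph $W$. It packages two genuinely non-trivial ingredients — the planarity of $4$-connected graphs with no $K_5$ minor, and the structural analysis that rules out every non-planar configuration except $W$, controlling how the minimum $3$-separators interact and showing that the only edge-maximal way to keep both sides of every such separator ``trivial between its three terminals'' is the M\"obius ladder on eight vertices. Getting that analysis clean is the heart of the matter.
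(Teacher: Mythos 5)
First, a point of comparison: the paper offers no proof of this lemma at all. It is quoted from Wagner's 1937 paper \cite{W1937} and used as a black box, so there is no internal argument to measure yours against; your proposal has to stand on its own.

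Your outline is the standard route to Wagner's structure theorem (induction via clique-sums over separators of size at most three, with the $3$-connected pieces classified at the leaves), and the reductions are essentially sound, though even there real content is being compressed into ``bookkeeping'': showing that each side $G_i$ of a clique-sum is again \emph{edge-maximal} without a $K_5$ minor requires pushing a $K_5$ minor of $G+e$ across a $3$-cut into $G_i+e$, which needs the $4$-connectivity of $K_5$ and an analysis of how branch sets meet the separator, not just the $3$-connectivity you invoke for $2$-cuts. The genuine gap, however, is the one you name yourself: the classification of the leaves. Your final paragraph does not prove that a $3$-connected, edge-maximal, $K_5$-minor-free graph of connectivity exactly $3$ with no separating triangle is the Wagner graph; it only records the expectation that ``threading this restriction through \dots should pin $G$ down to $W$.'' That step is where essentially all of the content of the theorem lives: one must take a non-triangle minimum separator $\{u,v,w\}$, use edge-maximality to produce a $K_5$ minor in $G+uv$, track how its five branch sets distribute over the separator and the two sides, and from these constraints forcedly reconstruct the M\"obius ladder on eight vertices while excluding every other non-planar configuration. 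As written, the proposal is a correct plan with the decisive argument absent, so it does not yet constitute a proof; since the paper itself treats the lemma as a citation to \cite{W1937}, the honest options are either to cite it likewise or to carry out that classification in full (for instance along the lines of the proof in Diestel's textbook).
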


\begin{figure}[htbp]
\begin{center}
\includegraphics[scale=0.6]{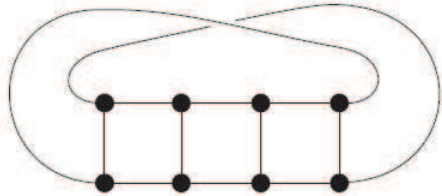} \\
\caption{The Wagner graph $W$}
\label{f}
\end{center}
\end{figure}

The lemma implies that  every $K_5$-minor free graph has a tree-decomposition $(T, \mathcal{F})$ such that each part is a planar graph or the Wagner graph and each separate set has size at most $3$.

\begin{lemma}\label{k5}
Let $G$ be a $K_5$-minor free graph of the maximum degree $7$. Suppose that
\begin{description}
  \item[$(a)$] $\delta(G)\geq 3$,
  \item[$(b)$] for any edge $xy\in E(G)$, $x$ is adjacent to at least $(8-d_G(y))$ $7$-vertices of $G$ other than $y$, and
  \item[$(c)$] for any edge $xy\in E(G)$, if $d_G(x)<7$, $d_G(y)<7$ and $d_G(x)+d_G(y)=9$, then every vertex of $N_G(N_G(\{x,y\}))\backslash\{x,y\}$ is a $7$-vertex of $G$.
\end{description}

Then $G$ contains a vertex $x$ satisfying one of the following conditions:
\begin{description}
  \item[$(1)$] $x$ is adjacent to two vertices $y, z$ such that $d_G(z)< 16- d_G(x)- d_G(y)$ and $xz$ is incident with at least $d_G(x)+d_G(y)-9$ triangles not containing $y$;
  \item[$(2)$] $x$ is adjacent to four vertices $v, w, y, z$ such that $d_G(w)\leq 5$, $d_G(z)=5$, $d_G(y)=5$, and $vwx$ and $xyz$ are triangles;
  \item[$(3)$] $x$ is adjacent to four vertices $v, w, y, z$ such that $d_G(v)< 7$, $d_G(w)< 7$, $d_G(y)= d_G(z)= 5$ and $xyz$ is a triangle.
\end{description}
\end{lemma}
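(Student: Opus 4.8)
The plan is to deduce Lemma~\ref{k5} from its planar precursor, Lemma~\ref{l1}, by localising inside one part of a tree-decomposition of $G$. First I would reduce to $G$ connected, since each of $(a)$--$(c)$ passes to a connected component and $\Delta=7$ is attained on one of them; thus $|V(G)|\ge 8$. By the remark following Lemma~\ref{K5}, applied to an edge-maximal $K_5$-minor free supergraph $\widehat G\supseteq G$ and then restricted to $G$, the graph $G$ has a tree-decomposition $(T,\mathcal F)$ with all separators of size at most $3$ in which every part $\widehat G[V_t]$ is a planar triangulation or the Wagner graph $W$, and hence every part $G_t=G[V_t]$ is planar or a subgraph of $W$. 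I would normalise $(T,\mathcal F)$ by splitting each triangulation part of $\widehat G$ along its separating triangles, so that every size-$3$ separator bounds a facial triangle in the adjacent triangulation parts, and by deleting any leaf $t$ with $V_t\subseteq V_{t'}$ (where $t'$ is its neighbour), so that $V_t\setminus V_{t'}\ne\emptyset$ for every leaf.

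If $|V(T)|=1$, then $G$ is planar (it cannot be $W$, whose maximum degree is $3$), and I would apply Lemma~\ref{l1} to $G$ with $Y=\{y_0\}$ for an arbitrary $y_0\in V(G)$: conditions $(a)$--$(c)$ of Lemma~\ref{l1} follow at once from the present ones (its $(b)$ is a weaker requirement, and its $(c)$ uses $N_H\subseteq N_G$), $H=G-y_0$ has an edge because $\delta(G)\ge 3$, and the vertex produced by Lemma~\ref{l1} is the desired one. Otherwise fix a leaf $t_0$ with neighbour $t_1$, put $S=V_{t_0}\cap V_{t_1}$ (so $1\le|S|\le 3$) and $B=V_{t_0}\setminus S\ne\emptyset$. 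The crucial point is that every $v\in B$ has $N_G(v)\subseteq V_{t_0}$, so $d_G(v)$ and the triangles of $G$ through $v$ are already determined by $G_{t_0}$. If $G_{t_0}\subseteq W$, then $|V_{t_0}|=8$ and $d_G(v)=3$ for all $v\in B$; if $B$ were independent, each $v\in B$ would be joined to all of $S$, forcing $|S|=3$ and a copy of $K_{5,3}$ inside $W$ — impossible on edge count — so some $v\in B$ has a neighbour $u\in B$, and then $x=v$, $y=u$ and any third neighbour $z$ of $v$ satisfy $(1)$ (the required triangle count $d_G(x)+d_G(y)-9=-3$ is vacuous and $d_G(z)\le 7<16-3-3$).

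The main case is $G_{t_0}$ planar with $|V(T)|\ge 2$. Here I would apply Lemma~\ref{l1} to $G_{t_0}':=G_{t_0}-E(G_{t_0}[S])$ with $Y:=S$: then $Y$ is an independent set of $G_{t_0}'$ lying on a common face (inherited from an embedding of $\widehat G[V_{t_0}]$ in which $S$ is facial), $G_{t_0}'$ is planar of maximum degree at most $7$, and $H=G_{t_0}'\setminus S=G[B]$. Conditions $(a)$--$(c)$ of Lemma~\ref{l1} constrain only $B$-vertices and edges of $G[B]$, and they transfer from the present hypotheses: a $B$-vertex keeps its degree in $G_{t_0}'$; for an edge $xy$ with $x,y\in B$, each of the at least $8-d_G(y)$ $7$-neighbours in $G$ of $x$ lies in $V_{t_0}=B\cup S$, those in $B$ being $7$-vertices of $G_{t_0}'$ and those in $S$ forming $N_Y(x)$, so the count required by $(b)$ of Lemma~\ref{l1} is met; and $(c)$ holds because $N_H\subseteq N_G$. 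Since $G_{t_0}'\subseteq G$ and $B$-vertices keep their degrees, the vertex $x\in B$ and its neighbours in $B$ produced by Lemma~\ref{l1} already realise the corresponding one of $(1)$--$(3)$ in $G$.

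The one gap — and the step I expect to be the main obstacle — is that Lemma~\ref{l1} needs $H$ to contain an edge, which fails exactly when $B$ is independent; then every $v\in B$ is joined to all of $S$, forcing $|S|=3$ and, by planarity of $G_{t_0}$ (so $K_{3,3}\not\subseteq G_{t_0}$), $|B|\le 2$, whence $\widehat G[V_{t_0}]$ is a triangulation on at most five vertices with $S$ facial. I would remove this obstruction by merging: the neighbouring part $\widehat G[V_{t_1}]$ contains the triangle $S$, so it is a triangulation (since $W$ is triangle-free), and re-inserting $\widehat G[V_{t_0}]$ into the face bounded by $S$ in $\widehat G[V_{t_1}]$ again yields a planar triangulation; this produces a tree-decomposition of $\widehat G$, and hence after restriction of $G$, with one fewer part and the same structure. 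Since $|V(T)|$ strictly decreases under such a merge, repeating it reaches either $|V(T)|=1$ (treated above) or a tree-decomposition having a leaf that is a $W$-part or a planar part whose $G[B]$ has an edge — for instance, after absorbing a degenerate leaf $t_0$ into $t_1$, a moved vertex of $B$ is adjacent, inside the new $G[B]$, to any vertex of $S$ lying outside the remaining separator of $t_1$. The earlier arguments then apply to that leaf, completing the proof.
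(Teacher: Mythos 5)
Your overall strategy (take the tree-decomposition of Lemma~\ref{K5}, apply Lemma~\ref{l1} to a leaf part with $Y$ equal to the separator, and treat the single-part case with $Y$ a singleton) is the same as the paper's, and your transfer of hypotheses $(a)$--$(c)$ to the leaf part is fine. The genuine gap is exactly where you anticipated it: the degenerate case where $B=V_{t_0}\setminus S$ is independent, and your proposed fix by iterated merging does not close it. Consider the configuration the paper isolates: $S=\{u_1,u_2,u_3\}$ independent in $G$, each $u_i$ a $7$-vertex, and two degree-$3$ vertices $u,w$ with $N_G(u)=N_G(w)=S$ (the separator triangle of $\widehat G$ is \emph{separating} in the neighbouring part, with a second star inside). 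Your first merge glues $\{u\}\cup S$ into the face of $\widehat G[V_{t_1}]$ bounded by $S$, but afterwards $S$ is a separating, non-facial triangle of the merged part (e.g.\ the merged piece on $\{u,w\}\cup S$ is $K_5$ minus the edge $uw$, in whose unique embedding $S$ separates $u$ from $w$). The next merge along $S$ is therefore impossible, and indeed the union of the two stars with the rest of the graph contracted onto $S$ contains a $K_{3,3}$-subdivision, so no planar merged part exists. Your iteration thus gets stuck at a degenerate leaf rather than reaching $|V(T)|=1$ or a leaf with an edge in $B$. Worse, one can check directly that in this configuration \emph{no} vertex of $G$ satisfies $(1)$--$(3)$ locally (the $u_i$ are pairwise nonadjacent by $(1)$, so $uu_i$ lies in no triangle, and all other neighbours of the $u_i$ are $7$-vertices by $(b)$), so the lemma cannot be finished here by exhibiting the vertex; a global argument is unavoidable.

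The paper handles this by running the whole proof as a minimum counterexample argument in $|V(G)|$: minimality of $|V(T)|$ forces the triangle $S$ to be separating in $G_{v_2}$ with a mirrored star $\{w\}\cup S$ inside, and then one passes to $G'=G\setminus\{u,w\}+\{u_1u_2,u_2u_3,u_3u_1\}$ (a contraction of $uu_1$ and $wu_2$, hence still $K_5$-minor free), verifies that $G'$ inherits $(a)$--$(c)$ and creates no new configuration $(1)$--$(3)$ because the $u_i$ and all their remaining neighbours are $7$-vertices, and contradicts minimality. Your proposal contains no analogue of this contraction/induction step, and without it the degenerate case is not resolved.
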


\begin{proof}

Suppose to the contrary that $G$ is a counterexample to the lemma such that $|V(G)|$ is as small as possible. Let  $(T, \mathcal{F})$  be a tree-decomposition of $G$ such that each part is a planar graph or the Wagner graph, each separate set is of size at most $3$ and $|V(T)|$ is as small as possible. Suppose that $|V(T)|=1$. Then $G$ must be a planar graph and $|V(G)|\geq 5$(since the Wagner graph is of the maximum degree $3$, it is contradicted to $(b)$).  Let $v\in V(G)$ and $Y=\{v\}$. Then $G$ satisfies the conditions $(a)$-$(c)$ of Lemma \ref{l1}, it follows that $G$ satisfies the lemma, a contradiction. So $|V(T)|\geq 2$.

Let $v_1v_2...v_t(t\geq 2)$ be a longest path of $T$. Then $v_1$ is a leaf of $T$. By $(b)$, $G_{v_1}$ is a planar graph. Let $S_{12}=V_{v_1}\cap V_{v_2}$, $G'_1=G_{v_1}\backslash S_{12}$ and $G^*_1=G'_1\cup\{xy|\, x\in V(G'_1), y\in S_{12}$ and $xy\in E(G)\}$. Without loss of generality, we  assume that $G'_1$ is connected( for otherwise we can consider a connected component of $G'_1$). If $|V(G'_1)|\geq 2$, then it follows from Lemma \ref{l1} that $G$ contains a vertex satisfying the lemma, a contradiction. If $|V(G'_1)|=1$ and $|S_{12}|\leq 2$, then $\delta(G)\leq 2$, a contradiction to $(a)$. So $|V(G'_1)|=1$ and $|S_{12}|=3$, that is, $G^*_1$ is a star of order 4. Let $V(G^*_1)=\{u, u_{1}, u_{2}, u_{3}\}$ such that $\{u\}=V(G'_1)$ and $S_{12}=\{u_{1}, u_{2}, u_{3}\}$.  Then $d_G(u)=3$.

Since $|S_{12}|=3$, $G_{v_2}$ is a planar graph. Let $K=G_{v_2}\cup \{xy|\, x,y\in S_{12}\; \text{and}\; xy\notin G_{v_2}\}$. Then $K$ is also a planar graph.  We embed $K$ into the plane such that $S_{23}=V_{v_2}\cap V_{v_3}$(if $t=2$, then we choose any vertex of $V_{v_2}$ as $S_{23}$) are located on the unbounded face $f_0$. By the minimality of $T$, The cycle $C=u_{1}u_{2}u_{3}u_{1}$ of $K$ must be a separated triangle and by the similar arguments as above, the inner part of $C$ is equivalent to a leaf of $T$ and is also a star of order $4$. Let $w$ be the inner vertex of $C$. Then $d_G(w)=3$ and $N_G(w)= N_{K}(w)= N_G(u)=\{u_{1}, u_{2}, u_{3}\}$.

By $(1)$, we have $u_{i}u_{j}\notin E(G)$ for any $1\leq i<j\leq 3$, that is, $\{u_{1}, u_{2}, u_{3}\}$ is an independent set of $G$. By $(b)$, we have that $d_G(u_{i})=7$ and all vertices of $N_G(u_i)\backslash \{u,w\}$ are $7$-vertices for any $1\leq i\leq 3$. Let $G'=G\backslash \{u,w\}+\{u_1u_2, u_2u_3, u_3u_1\}$($G'$ is obtained from $G$ by contracting edge $uu_1$ and $wu_2$). So $G'$ is also a $K_5$-minor free graph and is a counterexample. Since $|V(G')|<|V(G)|$, it is a contradiction. We complete the proof of the lemma.
\end{proof}

\section{The proof of Theorem\ref{th1}}

In investigating graph edge coloring problems, critical graphs always play an important role. This is due to the fact that problems for graphs in general may often be reduced to problems for critical graphs whose structure is more restricted. A connected graph $G$ is \emph{critical} if it is class 2, and $\chi'(G-e)<\chi'(G)$ for any edge $e\in E(G)$. A critical graph with the maximum degree $\Delta$ is called a \emph{$\Delta$-critical graph}. It is clear that every critical graph is 2-connected. Before the proof of our main result, we give some structure lemmas of critical graphs as follows.

\begin{lemma}\label{Vizing}
$(${\rm Vizing's Adjacency Lemma \cite{Vizing1965}}$)$ Let $G$ be a
$\Delta$-critical graph, and let $u$ and $v$ be adjacent vertices of $G$ with $d(v)=k$.

$(a)$ If $k<\Delta$, then $u$ is adjacent to at least $\Delta-k+1$ vertices of degree $\Delta$$;$

$(b)$ If $k=\Delta$, then $u$ is adjacent to at least two vertices of degree $\Delta$.
\end{lemma}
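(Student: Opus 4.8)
This is Vizing's classical Adjacency Lemma, and I would prove it in the usual way, exploiting criticality to fix a coloring of $G-uv$ and then manipulating it with fan recolorings and Kempe chains. Since $G$ is $\Delta$-critical, $G-uv$ has a proper $\Delta$-edge-coloring $\varphi$. For a vertex $w$ let $\bar\varphi(w)$ denote the set of colors missing at $w$, so $|\bar\varphi(u)|=\Delta-d(u)+1\ge 1$, $|\bar\varphi(v)|=\Delta-k+1$, and $|\bar\varphi(w)|=\Delta-d(w)$ for $w\notin\{u,v\}$; as $G$ is of class $2$, $\varphi$ does not extend to $uv$, so $\bar\varphi(u)\cap\bar\varphi(v)=\emptyset$. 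I would reduce the lemma to one assertion: \emph{if $w\in N_G(u)$ and $\varphi(uw)\in\bar\varphi(v)$, then $d(w)=\Delta$.} Granting it, part $(a)$ follows at once: the $\Delta-k+1$ colors of $\bar\varphi(v)$ all lie outside $\bar\varphi(u)$, so each $\beta\in\bar\varphi(v)$ equals $\varphi(uw_\beta)$ for some neighbor $w_\beta$ of $u$; the vertices $w_\beta$ are pairwise distinct and all distinct from $v$ (since $uv$ is uncolored), and each has degree $\Delta$. For part $(b)$, $\bar\varphi(v)$ is a single color $\beta$, so $w_\beta$ has degree $\Delta$, and together with $v$ (which has degree $\Delta=k$) we obtain two neighbors of $u$ of degree $\Delta$.

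To prove the assertion, suppose $d(w)<\Delta$; set $\beta=\varphi(uw)\in\bar\varphi(v)$ and choose $\alpha\in\bar\varphi(u)$ and $\gamma\in\bar\varphi(w)$. First, $\bar\varphi(u)\cap\bar\varphi(w)=\emptyset$: otherwise recolor $uw$ with a color missing at both of its ends, which frees $\beta$ at $u$, and then color $uv$ with $\beta$ (still missing at $v$), extending $\varphi$ to $G$ --- impossible. Hence $\alpha,\beta,\gamma$ are pairwise distinct, $w$ has an $\alpha$-edge, and $u$ has a unique $\gamma$-edge; so $w$ is an endpoint of the $(\alpha,\gamma)$-Kempe chain $Q$ through it (via its $\alpha$-edge), and $u$ is an endpoint of an $(\alpha,\gamma)$-chain (via its $\gamma$-edge). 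If $Q$ does not end at $u$, then $u\notin Q$, and swapping the two colors along $Q$ leaves $\bar\varphi(u)$ unchanged while making $\alpha$ missing at $w$; recoloring $uw$ with $\alpha$ then frees $\beta$ at $u$, $\beta$ is still missing at $v$ (the swap touches no $\beta$-edge), and coloring $uv$ with $\beta$ extends $\varphi$ --- impossible. Therefore the $(\alpha,\gamma)$-chain must run from $w$ to $u$.

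This last case --- the $(\alpha,\gamma)$-chain joining $w$ to $u$ --- is the heart of the matter and the step I expect to be the main obstacle. If there is slack (e.g.\ $|\bar\varphi(w)|\ge 2$ or $|\bar\varphi(u)|\ge 2$) one can usually escape by replacing $\gamma$ or $\alpha$ by a different missing color; but in the tight corner $d(w)=\Delta-1$, $d(u)=\Delta$ the full fan apparatus is needed. I would take a maximal fan $F=(v_0,\dots,v_p)$ at $u$ with $v_0=v$ (distinct neighbors of $u$ with $\varphi(uv_i)\in\bar\varphi(v_{i-1})$ for $1\le i\le p$), prove the standard fan lemmas --- the sets $\bar\varphi(u),\bar\varphi(v_0),\dots,\bar\varphi(v_p)$ are pairwise disjoint (by ``shifting'' $F$ so that $uv_i$ becomes the uncolored edge and re-invoking non-extendability there), and for $\alpha\in\bar\varphi(u)$ and $\gamma$ missing at a fan vertex the $(\alpha,\gamma)$-chain reaches $u$ --- and, combining these with the recoloring moves above, show that no neighbor of $u$ lying outside $F$ can have degree $<\Delta$. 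Disjointness gives
\[
(\Delta-d(u)+1)+(\Delta-k+1)+\sum_{j\ge 1}\big(\Delta-d(v_j)\big)\le\Delta ,
\]
and since each $v_j$ with $d(v_j)<\Delta$ contributes at least $1$ to the last sum, a short count leaves at least $\Delta-k+1$ neighbors of $u$ of degree $\Delta$ when $k<\Delta$, and at least two (one being $v$) when $k=\Delta$. I expect the fiddly points to be exactly this bound on the non-fan neighbors of $u$, and the bookkeeping --- after each shift and each Kempe swap --- of which colors remain missing at $u$, $v$, and the fan vertices.
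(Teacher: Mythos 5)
The paper itself gives no proof of this statement: it is quoted as Vizing's Adjacency Lemma and cited to \cite{Vizing1965}, then used as a black box in Section 3. So there is no internal proof to compare against, and your attempt has to be judged on its own. Your overall strategy --- color $G-uv$, build a maximal fan at $u$, prove the missing-color sets are pairwise disjoint, and count --- is the standard and correct route. But as written the argument has genuine gaps, which you partly acknowledge.

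Concretely: (i) the reduction to the assertion ``$\varphi(uw)\in\bar\varphi(v)$ implies $d(w)=\Delta$'' is never closed. You dispose of the case where the $(\alpha,\gamma)$-chain from $w$ avoids $u$, defer the remaining case to ``the full fan apparatus,'' and the fan apparatus you then describe does not prove that assertion --- it proves the lemma directly by counting, without identifying \emph{which} neighbors have degree $\Delta$. The assertion is strictly stronger than the lemma (it pins down the specific $\Delta-k+1$ neighbors carrying the colors of $\bar\varphi(v)$), and nothing in your sketch establishes it: elementariness of $\{u,v,w\}$ only gives $\bar\varphi(u)\cap\bar\varphi(w)=\emptyset$, not $\bar\varphi(w)=\emptyset$. (ii) The claim that ``no neighbor of $u$ lying outside $F$ can have degree $<\Delta$'' is false: a neighbor $z$ whose edge color $\varphi(uz)$ is missing at no fan vertex is simply never admitted to $F$, whatever its degree, and critical graphs do have such low-degree non-fan neighbors (e.g.\ the degree-$(\Delta-1)$ neighbors permitted by Corollary \ref{cor}). (iii) The displayed disjointness inequality alone does not yield the conclusion; it only bounds the number of low-degree fan vertices and says nothing about how many fan vertices there are. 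You also need the consequence of maximality that every color missing at a fan vertex appears as $\varphi(uv_i)$ for some $i\ge 1$, whence $p\ge(\Delta-k+1)+\sum_{j\ge1}(\Delta-d(v_j))$; since each low-degree $v_j$ ($j\ge1$) contributes at least $1$ to that sum, subtracting their number leaves at least $\Delta-k+1$ fan vertices of degree $\Delta$ among $v_1,\dots,v_p$, which (adding $v$ itself when $k=\Delta$) is exactly the lemma. Supplying that maximality count, and either proving or discarding the opening assertion, would complete the proof.
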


From the above Lemma, it is easy to get the following corollary.

\begin{corollary} \label{cor}
Let $G$ be a $\Delta$-critical graph. Then

$(a)$ $\delta(G)\geq2$,

$(b)$ every vertex is adjacent to at most one $2$-vertex and at least two $\Delta$-vertices,

$(c)$ for any edge $uv\in E(G)$, $d_G(u)+d_G(v)\geq \Delta+2$, and

$(d)$ if $uv\in E(G)$ and $d(u)+d(v)=\Delta+2$, then every vertex of $N(\{u,v\})\setminus\{u,v\}$ is a $\Delta$-vertex.
\end{corollary}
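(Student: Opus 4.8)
The plan is to derive all four statements directly from Vizing's Adjacency Lemma (Lemma~\ref{Vizing}), hereafter VAL, together with the remark made just above that every critical graph is $2$-connected; throughout I may assume $\Delta\geq 3$, which is all the application needs. For $(a)$, a $2$-connected graph has minimum degree at least $2$, so $\delta(G)\geq 2$. (Directly: a degree-$1$ vertex $v$ with neighbour $u$ would let us extend a $\Delta$-edge-colouring of $G-uv$ by colouring $uv$ with a colour missing at $u$, since $d_{G-uv}(u)\leq\Delta-1$, contradicting criticality.)

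For $(b)$, fix $u$; by $(a)$ it has a neighbour $v$, and VAL$(a)$ (if $d(v)<\Delta$) or VAL$(b)$ (if $d(v)=\Delta$) gives that $u$ is adjacent to at least $\max\{\Delta-d(v)+1,2\}\geq 2$ vertices of degree $\Delta$. If $u$ were adjacent to two distinct $2$-vertices $v_1,v_2$, then VAL$(a)$ applied to $uv_1$ would give $u$ at least $\Delta-1$ neighbours of degree $\Delta$, none equal to $v_1$ or $v_2$, whence $d(u)\geq(\Delta-1)+2>\Delta$, a contradiction. For $(c)$, take $uv\in E(G)$ with $d(v)=k\leq d(u)$; if $k=\Delta$ then $d(u)=\Delta$ and we are done, while if $k<\Delta$ then VAL$(a)$ gives $u$ at least $\Delta-k+1$ neighbours of degree $\Delta$, none equal to $v$, so $d(u)\geq\Delta-k+2$ and $d(u)+d(v)\geq\Delta+2$.

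For $(d)$, let $uv\in E(G)$ with $d(u)+d(v)=\Delta+2$ and write $d(v)=k\leq d(u)=\Delta+2-k$, so $k\leq(\Delta+2)/2<\Delta$. Applying VAL$(a)$ to $uv$ with $v$ the endpoint of smaller degree, $u$ has at least $\Delta-k+1$ neighbours of degree $\Delta$; since $v$ is not among them and $d(u)=(\Delta-k+1)+1$, every neighbour of $u$ other than $v$ is a $\Delta$-vertex. For the neighbours of $v$ other than $u$: if $k\geq 3$ then $d(u)=\Delta+2-k<\Delta$, and VAL$(a)$ applied to $uv$ with $u$ now the endpoint of smaller degree gives $v$ at least $\Delta-d(u)+1=k-1$ neighbours of degree $\Delta$, none equal to $u$, so all $k-1$ of them are exactly the neighbours of $v$ distinct from $u$. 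If $k=2$ then $v$ has exactly two neighbours $u$ and $w$ with $d(u)=\Delta$, and were $d(w)<\Delta$, VAL$(a)$ applied to $vw$ would force $v$ to have at least $\Delta-d(w)+1\geq 2$ neighbours of degree $\Delta$, impossible; hence $d(w)=\Delta$. Together these give that every vertex of $N(\{u,v\})\setminus\{u,v\}$ has degree $\Delta$.

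The only step requiring genuine care is $(d)$: the argument is not symmetric in $u$ and $v$, and the borderline case $d(u)=\Delta$ (where VAL$(a)$ does not apply to $u$) has to be treated separately using the fact that then $v$ has degree $2$; parts $(a)$--$(c)$ are immediate book-keeping with VAL.
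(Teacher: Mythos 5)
Your proof is correct and follows exactly the route the paper intends: the paper gives no explicit proof, merely noting that the corollary follows easily from Vizing's Adjacency Lemma, and your argument supplies precisely those details (including the genuinely non-automatic case analysis in $(d)$ for the endpoint of degree $\Delta$, handled via the degree-$2$ neighbour). No gaps.
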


\begin{lemma}{\rm \cite{Zhang2000}} \label{zhang}
Suppose that $G$ is a $\Delta$-critical graph, $uv\in E(G)$ and $d(u)+d(v)=\Delta+2$. Then

$(a)$ every vertex of $N_G(N_G(\{u,v\}))\setminus\{u,v\}$ is of degree at least $\Delta-1$$;$

$(b)$ if $d(u),d(v)<\Delta$, then every vertex of $N_G(N_G(\{u,v\}))\setminus\{u,v\}$ is a $\Delta$-vertex.
\end{lemma}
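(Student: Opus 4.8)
The plan is to argue by contradiction, exploiting the edge-criticality of $G$ through Vizing-type recolorings, while using Corollary~\ref{cor}(d) to control the middle vertex of a second-neighbor path.

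Fix the edge $uv$ with $d(u)+d(v)=\Delta+2$. Since $G$ is $\Delta$-critical, $H:=G-uv$ admits a proper edge $\Delta$-coloring $\phi$; for a vertex $x$ let $\bar\phi(x)\subseteq\{1,\dots,\Delta\}=:C$ denote the set of colors missing at $x$ in $H$. Three facts will be used repeatedly. First, $|\bar\phi(u)|=\Delta-d(u)+1$ and $|\bar\phi(v)|=\Delta-d(v)+1$, so $|\bar\phi(u)|+|\bar\phi(v)|=\Delta$, while $\bar\phi(u)\cap\bar\phi(v)=\emptyset$ because $\phi$ does not extend to $uv$; hence $\bar\phi(u)$ and $\bar\phi(v)$ partition $C$, and in particular every color present at $u$ lies in $\bar\phi(v)$ and vice versa. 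Second, for all $\beta\in\bar\phi(u)$ and $\gamma\in\bar\phi(v)$ the $(\beta,\gamma)$-Kempe chain of $\phi$ is a path joining $u$ to $v$, since otherwise swapping colors on the component of $u$ frees $\gamma$ at $u$ and $uv$ can be colored by $\gamma$. Third, by Corollary~\ref{cor}(d) every vertex of $N(\{u,v\})\setminus\{u,v\}$ is a $\Delta$-vertex and therefore misses no color.

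Now take $w\in N(N(\{u,v\}))\setminus\{u,v\}$ together with a vertex $y\in N(\{u,v\})$ such that $wy\in E(G)$; by the symmetry $u\leftrightarrow v$ we may assume $y\in N(u)$. If $w\in N(\{u,v\})\cup\{u,v\}$ then $d(w)=\Delta$ by Corollary~\ref{cor}(d), so we assume $w$ is genuinely at distance two from $\{u,v\}$; then $d_H(w)=d(w)$, $d(y)=\Delta$, $\bar\phi(y)=\emptyset$, and we set $\gamma:=\phi(uy)\in\bar\phi(v)$ and $\delta:=\phi(yw)$, noting $\gamma\neq\delta$. For part~(a) suppose $d(w)\le\Delta-2$, so $|\bar\phi(w)|\ge2$. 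Every recoloring below has the same goal: to reach a proper $\Delta$-coloring of $H$ in which $\gamma$ is missing at $u$, because then $\gamma$ is still missing at $v$, so coloring $uv$ by $\gamma$ yields a $\Delta$-coloring of $G$, contradicting criticality. The basic move is a transposition at $y$: if $\delta\in\bar\phi(u)$ and $\gamma\in\bar\phi(w)$, then exchanging the colors of $uy$ and $yw$ is proper and frees $\gamma$ at $u$. When this is blocked, one first reshuffles near $w$ by choosing $\rho\in\bar\phi(w)\setminus\{\gamma\}$ and interchanging colors on the $(\rho,\delta)$-chain issuing from $w$ (which starts $w,y,\dots$); this produces a coloring in which $w$ misses $\delta$ while $yw$ carries $\rho$, so that if the chain avoids $u$ and $v$ and $\rho\in\bar\phi(u)$, the transposition at $y$ applies and again frees $\gamma$ at $u$. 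The two missing colors at $w$ supply the freedom to select $\rho$, and the remaining branches — namely $\gamma\notin\bar\phi(w)$; the case $\delta\in\bar\phi(v)$, in which $\delta=\phi(uy'')$ for a $\Delta$-neighbor $y''\neq y$ of $u$; and the situations where the relevant chains reach $\{u,v\}$ — are to be settled by a bounded number of further transpositions and Kempe interchanges.

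For part~(b) assume in addition $d(u),d(v)<\Delta$, so that $|\bar\phi(u)|\ge2$ and $|\bar\phi(v)|\ge2$; by part~(a) we already have $d(w)\ge\Delta-1$, so suppose $d(w)=\Delta-1$ and $\bar\phi(w)=\{\mu\}$. The slack that in part~(a) came from the two missing colors at $w$ is now provided by the two missing colors at each of $u$ and $v$: running the same recoloring scheme with choices $\beta_1,\beta_2\in\bar\phi(u)$ and $\gamma_1,\gamma_2\in\bar\phi(v)$, one reroutes the obstructing chains and frees some $\gamma_i$ at $u$, again a contradiction; the extreme case $d(v)=\Delta$, $d(u)=2$ is excluded here by hypothesis and is handled directly in part~(a). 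I expect the main obstacle to be exactly this case analysis — ensuring that successive Kempe interchanges do not interfere, so that after reshuffling near $w$ the chain used to free $\gamma$ still terminates where intended, and checking the handful of degenerate configurations; everything else is the set-up recorded above.
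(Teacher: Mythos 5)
This lemma is not proved in the paper at all: it is imported verbatim from Zhang (2000), so there is no in-paper argument to compare against, and your attempt has to stand on its own. Your set-up is the standard and correct one: in a $\Delta$-coloring $\phi$ of $G-uv$ the sets $\bar\phi(u)$ and $\bar\phi(v)$ partition the colors, every $(\beta,\gamma)$-chain with $\beta\in\bar\phi(u)$, $\gamma\in\bar\phi(v)$ is a $u$--$v$ path, and the goal of every recoloring is to make some color of $\bar\phi(v)$ missing at $u$. Your ``basic move'' (the transposition at $y$ when $\delta\in\bar\phi(u)$ and $\gamma\in\bar\phi(w)$) is correct as far as it goes.

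However, the proof has a genuine gap: everything beyond that easiest case is deferred rather than done. You explicitly leave open the branches $\gamma\notin\bar\phi(w)$, $\delta\in\bar\phi(v)$, and all situations where the auxiliary chains meet $u$ or $v$, saying they ``are to be settled by a bounded number of further transpositions and Kempe interchanges.'' That case analysis \emph{is} the lemma; in Zhang's paper it occupies several pages of fan and chain arguments, and there is no a priori reason the interchanges compose as hoped. Even the one intermediate step you do describe is not closed: after interchanging the $(\rho,\delta)$-chain from $w$ you obtain that $\delta$ is missing at $w$ and $yw$ carries $\rho$, but the subsequent transposition at $y$ needs $\rho\in\bar\phi(u)$ and $\gamma\in\bar\phi(w)$, and you only \emph{assume} these (both choices of $\rho$ in $\bar\phi(w)\setminus\{\gamma\}$ could lie in $\bar\phi(v)$, and the chain may pass through $u$ or $v$ and change their missing sets). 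Part (b) is sketched even more loosely (``one reroutes the obstructing chains''), with no argument for why two missing colors at each of $u$ and $v$ suffice when $w$ misses only one. As written, this is a correct framing plus a verification of the trivial configuration, not a proof; to complete it you would need to carry out the full interchange analysis, or simply cite Zhang (2000) as the paper does.
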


\begin{lemma}{\rm \cite{SZ2001}} \label{zhao2}
No $\Delta$-critical graph has distinct vertices $x, y, z$ such that $x$ is adjacent to $y$ and $z$, $d(z)<2\Delta-d(x)-d(y)+2$, and $xz$ is in at least $d(x)+d(y)-\Delta-2$ triangles not containing $y$.
\end{lemma}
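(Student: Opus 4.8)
I argue by contradiction, via an edge-coloring extension argument in the spirit of Vizing's Adjacency Lemma. Suppose $G$ is a $\Delta$-critical graph containing distinct vertices $x,y,z$ with $x\sim y$, $x\sim z$, $d(z)\le 2\Delta-d(x)-d(y)+1$, and such that $xz$ lies in at least $k:=d(x)+d(y)-\Delta-2$ triangles not containing $y$. By Corollary~\ref{cor}(c) we have $k\ge 0$. If $k=0$ then $d(x)+d(y)=\Delta+2$, and since $z\in N(\{x,y\})\setminus\{x,y\}$, Corollary~\ref{cor}(d) (which comes from Vizing's Adjacency Lemma, Lemma~\ref{Vizing}) forces $d(z)=\Delta$, contradicting $d(z)\le\Delta-1$. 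Hence we may assume $k\ge 1$; fix distinct common neighbors $w_1,\dots,w_k$ of $x$ and $z$ with each $w_i\neq y$, and note that $d(z)\le\Delta-k-1$.

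Since $G$ is critical, $G-xz$ has a proper $\Delta$-edge-coloring $c$ (if the analysis below demands it, I would take $c$ extremal among such colorings, this being the standard way to eliminate degenerate recolorings). For a vertex $v$ let $M(v)\subseteq\{1,\dots,\Delta\}$ be the set of colors missing at $v$ under $c$. Were $c$ extendable to $xz$ we would properly $\Delta$-edge-color $G$, contradicting that $G$ is class~$2$; thus $M(x)\cap M(z)=\emptyset$. Counting gives $|M(x)|=\Delta+1-d(x)$ and $|M(z)|=\Delta+1-d(z)\ge k+2$. Two observations organize what follows. First, for every $\alpha\in M(x)$ and $\gamma\in M(z)$ the $(\alpha,\gamma)$-Kempe chain starting at $x$ must terminate at $z$: otherwise, swapping the two colors along it makes $\gamma$ missing at $x$ while it stays missing at $z$, so $xz$ can be colored $\gamma$. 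Tracing this chain, its last edge at $z$ is the unique $\alpha$-colored edge $zv$, and as $\gamma$ runs over $M(z)$ the penultimate vertex is always this same $v$, whose edge just before $zv$ has color $\gamma$; hence $v$ carries edges of all $\ge k+2$ colors of $M(z)$ together with the edge $zv$, so $d(v)\ge k+3$, and if moreover $c(zw_i)\in M(x)$ for some $i$ then choosing $\alpha=c(zw_i)$ yields $v=w_i$. Second, the $k$ triangles provide $k$ edges $xw_1,\dots,xw_k$ carrying distinct colors, all present at $x$, whereas $z$ misses at least $k+2$ colors.

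The core of the proof is to set these two facts against each other: beginning from $c$ and performing fan rotations about $x$ --- redistributing the colors along $xw_i$, and along $zw_i$ when necessary --- interleaved with the forced $(\alpha,\gamma)$-chains above, one should in every configuration obtain a proper $\Delta$-edge-coloring of $G-xz$ in which $M(x)$ meets $M(z)$; coloring $xz$ then contradicts $G$ being class~$2$. I expect this to be the genuine obstacle: a rotation that frees a color at $x$ through a triangle edge may clash with the rigidity forced on the $(\alpha,\gamma)$-chains, so one is pushed into a case analysis according to where each of the colors $c(xw_i)$ and $c(zw_i)$ sits with respect to $M(x)$, $M(z)$ and $c(xy)$, with the extremal choice of $c$ and further Kempe swaps used to rule out the bad cases. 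The inequality that should close every branch is the imbalance between the at least $k+2$ colors missing at $z$ and the only $k$ edges leaving $x$ toward the triangle-mates of $z$, which should force either a color repeated at a vertex where the coloring is proper, or a vertex of degree more than $\Delta$.
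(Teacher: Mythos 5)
The paper itself gives no proof of Lemma \ref{zhao2}: it is quoted verbatim from Sanders and Zhao \cite{SZ2001}, so the only fair comparison is with their original argument, which is exactly the fan-rotation/Kempe-chain argument you outline. Your preliminary steps are sound. The reduction of the case $k=0$ to Vizing's Adjacency Lemma via Corollary \ref{cor}(c),(d) is correct; so are the disjointness of $M(x)$ and $M(z)$, the count $|M(z)|\ge k+2$ coming from $d(z)\le\Delta-k-1$, and the observation that for a fixed $\alpha\in M(x)$ every $(\alpha,\gamma)$-chain from $x$ with $\gamma\in M(z)$ must end at $z$ through the same penultimate vertex $v$ (so that $d(v)\ge k+3$, with $v=w_i$ when $\alpha=c(zw_i)$). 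These are indeed the standard opening moves of the Sanders--Zhao proof.

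But your proof stops exactly where the real difficulty begins, and this is a genuine gap rather than a stylistic shortcut. The closing paragraph --- ``one should in every configuration obtain a proper $\Delta$-edge-coloring of $G-xz$ in which $M(x)$ meets $M(z)$,'' ``I expect this to be the genuine obstacle,'' ``the inequality that should close every branch'' --- states the desired conclusion without deriving it. The numerical imbalance between the at least $k+2$ colors missing at $z$ and the $k$ edges $xw_1,\dots,xw_k$ does not by itself yield a contradiction: one must actually perform the fan rotations at $x$, track how each rotation alters $M(x)$ and which Kempe chains remain forced afterwards, and separately dispose of the cases where a color $c(xw_i)$ or $c(zw_i)$ lies in $M(z)$, lies in $M(x)$, or equals $c(xy)$; the ``extremal choice of $c$'' you invoke has to be specified before it can eliminate anything. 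That case analysis occupies the bulk of the corresponding proof in \cite{SZ2001} and is entirely absent here. As written, the proposal is a correctly oriented sketch whose decisive combinatorial step is asserted to exist rather than carried out.
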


\begin{lemma}{\rm \cite{SZ2001}} \label{zhao3}
No $\Delta$-critical graph has distinct vertices $v, w, x, y, z$ such that $d(w)\leq \Delta-2$, $d(x)+d(y)\leq \Delta+3$, $d(x)\geq5$, $d(y)\geq5$, and $vwz$ and $xyz$ are triangles.
\end{lemma}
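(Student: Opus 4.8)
The plan is to argue by contradiction: suppose a $\Delta$-critical graph $G$ contains such vertices $v,w,x,y,z$, and exploit criticality (every proper subgraph of $G$ has a $\Delta$-edge-colouring) together with Kempe-chain recolouring, the low-degree vertices supplying the slack needed to extend a colouring to all of $G$.

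First I would make some reductions. Since $xyz$ is a triangle, $xy\in E(G)$, so Corollary~\ref{cor}(c) gives $d(x)+d(y)\ge\Delta+2$; with the hypothesis $d(x)+d(y)\le\Delta+3$ this leaves only $d(x)+d(y)\in\{\Delta+2,\Delta+3\}$. (Also, if $\Delta\le 6$ then $d(x)+d(y)\ge 10>\Delta+3$ is impossible, so $\Delta\ge 7$.) If $d(x)+d(y)=\Delta+2$: when $w$ is adjacent to $x$ or $y$, Corollary~\ref{cor}(d) applied to the edge $xy$ forces $d(w)=\Delta$, contradicting $d(w)\le\Delta-2$; when $w$ is adjacent to neither, then $w\in N_G(N_G(\{x,y\}))\setminus\{x,y\}$ because $w\sim z\sim x$, and Lemma~\ref{zhang}(a) forces $d(w)\ge\Delta-1$, again a contradiction. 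Hence we may assume $d(x)+d(y)=\Delta+3$; in particular $d(x),d(y)\le\Delta-2$ and $d(z)\ge 4$ (as $z$ is adjacent to the four distinct vertices $v,w,x,y$), while $w$ has at least two, and $x,y$ at least three, colours free in any $\Delta$-colouring of $G$ minus an edge not incident to them.

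For the remaining case I would delete $xy$ and take a $\Delta$-edge-colouring $\phi$ of $G-xy$; writing $\bar\phi(u)$ for the set of colours missing at $u$, we have $|\bar\phi(x)|+|\bar\phi(y)|=\Delta-1$. If $\bar\phi(x)\cap\bar\phi(y)\ne\emptyset$ we colour $xy$ and finish, so assume these sets are disjoint; then exactly one colour $\gamma$ is present at both $x$ and $y$, and (relabelling $x,y$ if needed) the edge $zx$ is not coloured $\gamma$, hence $\phi(zx)\in\bar\phi(y)$. Now the standard Vizing argument says that for every $\alpha\in\bar\phi(x)$ and every $\beta\in\bar\phi(y)$ the $\alpha\beta$-Kempe chain from $x$ must reach $y$; taking $\beta=\phi(zx)$ forces that chain to leave $x$ along $zx$ and hence pass through $z$, which (after a possible swap) yields $\bar\phi(x)\cap\bar\phi(z)=\emptyset$, and a symmetric analysis of the edge $zy$ (or, when $zy$ carries the colour $\gamma$, a preliminary recolouring of $zy$) gives $\bar\phi(y)\cap\bar\phi(z)=\emptyset$ as well. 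Counting colours then forces $d(z)\in\{\Delta-1,\Delta\}$. Finally, I would bring in the triangle $vwz$: since $d(w)\le\Delta-2$, $w$ carries at least two missing colours, so a fan at $z$ grown through $w$ and $v$ (using the edges $wz$, $zv$, $vw$), or a Kempe chain that $w$ can absorb, produces either an extension of $\phi$ to $G$ or a forbidden configuration — for instance an application of Lemma~\ref{zhao2} or of Vizing's Adjacency Lemma (Lemma~\ref{Vizing}) at a $\Delta$-neighbour of $z$ — the desired contradiction.

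The main obstacle is precisely this last step together with the intervening Kempe-chain bookkeeping: one has to run the argument through all cases for the colours of the five triangle edges $xy,yz,zx$ and $vw,wz,zv$, checking that after each swap the ``$x$ and $y$ share no free colour'' situation (or its analogue) is preserved, and the sub-case $d(z)=\Delta$ is the tightest, since then $z$ is saturated and all the room must come from $w$ and the triangle $vwz$ — there it is probably cleaner to instead delete the edge $zx$ (or $zw$) and build a fan at $z$ rooted at $x$, using that $x$ and $y$ each have several free colours to rotate the fan. I expect this case analysis, rather than any single idea, to be the bulk of the work.
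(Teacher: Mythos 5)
First, note that the paper does not prove this statement at all: it is quoted verbatim from Sanders and Zhao \cite{SZ2001}, so there is no in-paper proof to compare against, only the original source. Measured against what an actual proof requires, your proposal gets the preliminary reductions right but stops short of the real content. The opening reductions are correct and clean: $d(x)+d(y)\ge\Delta+2$ from Corollary~\ref{cor}(c), and in the case $d(x)+d(y)=\Delta+2$ the vertex $w$ lies in $N_G(N_G(\{x,y\}))\setminus\{x,y\}$ (via $w\sim z\sim x$), so Lemma~\ref{zhang}(a) forces $d(w)\ge\Delta-1$, contradicting $d(w)\le\Delta-2$. The middle portion is also essentially sound: after uncolouring $xy$, the disjointness $\bar\phi(x)\cap\bar\phi(y)=\emptyset$, the single doubly-present colour $\gamma$, and the Kempe-chain argument forcing $\bar\phi(z)$ to be disjoint from $\bar\phi(x)\cup\bar\phi(y)$ do yield $d(z)\ge\Delta-1$ (though the sub-case $\phi(zy)=\gamma$, which you dispose of with ``a preliminary recolouring of $zy$,'' already needs a genuine argument rather than a phrase, since at that point you have not yet established anything about $\bar\phi(z)$).

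The genuine gap is the final step, which is where the lemma actually lives. Everything up to ``$d(z)\in\{\Delta-1,\Delta\}$'' uses only the triangle $xyz$ and the degree sum $d(x)+d(y)\le\Delta+3$; the hypothesis that distinguishes this lemma --- the \emph{second} triangle $vwz$ with $d(w)\le\Delta-2$ --- is invoked only in the sentence asserting that ``a fan at $z$ grown through $w$ and $v$ \dots produces either an extension of $\phi$ to $G$ or a forbidden configuration.'' This is not an argument. The uncoloured edge is $xy$, and $w$ and $v$ need not be adjacent to $x$ or $y$, so no Vizing fan rooted at the uncoloured edge reaches them directly; to exploit $w$ you must first migrate the uncoloured edge onto $z$ (e.g.\ colour $xy$ from $\bar\phi(x)$ after uncolouring $zx$) and then carry out a fan/Kempe analysis at $z$ in which $v$ and $z$ may both have degree $\Delta$ and the only slack is the two free colours at $w$ --- and it is exactly this analysis, with its case split on the colours of $zv$, $zw$, $vw$ relative to $\bar\phi(w)$ and $\bar\phi(x)\cup\bar\phi(y)$, that constitutes the proof in \cite{SZ2001}. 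Appealing to Lemma~\ref{zhao2} or Lemma~\ref{Vizing} here does not close the gap either, since you do not exhibit a configuration to which they apply. As written, the proposal is a plausible plan whose decisive step is missing.
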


\begin{lemma}{\rm \cite{SZ2001}} \label{zhao4}
No $\Delta$-critical graph has distinct vertices $v, w, x, y, z$ such that $d(v)\leq \Delta-1$, $d(w)\leq\Delta-1$, $d(x)+d(y)\leq \Delta+3$, $d(x)\geq4$, $d(y)\geq4$, $xyz$ is a triangle, and $z$ is adjacent to $v$ and $w$.
\end{lemma}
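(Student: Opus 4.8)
The plan is to argue by contradiction: assume $G$ is a $\Delta$-critical graph containing distinct vertices $v,w,x,y,z$ with the listed properties. Since $d(x)\ge 4$ and $d(y)\ge 4$ while $d(x)+d(y)\le\Delta+3$, both $d(x)$ and $d(y)$ are at most $\Delta-1$; and by Corollary~\ref{cor}$(c)$ we have $d(x)+d(y)\ge\Delta+2$. So $d(x)+d(y)\in\{\Delta+2,\Delta+3\}$, and I would dispose of these two cases separately.

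The case $d(x)+d(y)=\Delta+2$ is handled by the earlier adjacency results with no recoloring at all. Here $z$ is adjacent to both $x$ and $y$, hence $z\in N_G(\{x,y\})$, so each of $v,w$ — a neighbor of $z$ other than $x$ and $y$ — lies in $N_G(\{x,y\})\cup N_G(N_G(\{x,y\}))$. If $v\in N_G(\{x,y\})$, Corollary~\ref{cor}$(d)$ gives $d(v)=\Delta$; otherwise $v\in N_G(N_G(\{x,y\}))\setminus\{x,y\}$ and, since $d(x),d(y)<\Delta$, Lemma~\ref{zhang}$(b)$ gives $d(v)=\Delta$. The same applies to $w$, contradicting $d(v),d(w)\le\Delta-1$.

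The case $d(x)+d(y)=\Delta+3$ is the substantive one. Assume without loss of generality $d(x)\le d(y)$, delete a carefully chosen edge incident with the triangle $xyz$ — I would begin with $e=xy$ — and let $\phi$ be a proper $\Delta$-edge-coloring of $G-e$, which exists because $G$ is critical. Since $\phi$ does not extend to $e$, the colors missing at $x$ and at $y$ are disjoint, and a count using $d(x)+d(y)=\Delta+3$ shows that exactly one color is present at both ends of $e$, which largely rigidifies the local color pattern. Then one records the colors of $zx,zy,zv,zw$ together with the missing sets at $v,w,z$ (all nonempty, since $d(v),d(w)\le\Delta-1$), and uses Vizing-fan rotations at $x$ and at $z$ together with a Kempe $(\alpha,\beta)$-chain to engineer a common missing color at the two ends of $e$; extending $\phi$ to $G$ then contradicts $\chi'(G)=\Delta+1$. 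The triangle $xyz$ is doing real work here: it forces $zx$, $zy$, $xy$ to carry three distinct colors and thereby pins down how the chain propagates, while the two neighbors $v,w$ of $z$ of degree below $\Delta$ supply exactly the extra missing colors the fan rotations need.

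The hard part will be the bookkeeping in the last step. One must split according to whether $\phi(xz)$ is missing at $y$ (and symmetrically for $\phi(yz)$ at $x$), whether the doubly-present color equals $\phi(xz)$ or $\phi(yz)$, and where the missing colors of $v$ and $w$ sit relative to the colors appearing at $z$; in several of the resulting subcases two Kempe chains are active at once, one emanating from $v$ and one from $w$, and one has to verify that these chains and the fan rotations do not collide with each other or with the edges $zx,zy,xy$. Making a single uniform argument cover all the subcases may require uncoloring $xz$ rather than $xy$, so that the fan is rooted at $z$ and $v,w$ enter symmetrically; choosing the right edge to delete is, I expect, the crux.
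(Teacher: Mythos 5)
First, note that the paper itself does not prove this statement: Lemma~\ref{zhao4} is quoted verbatim from Sanders and Zhao \cite{SZ2001} and used as a black box, so there is no in-paper proof to compare against. Judged on its own terms, your attempt has one solid piece and one genuine gap. The solid piece is the reduction: since $d(x),d(y)\geq 4$ and $d(x)+d(y)\leq\Delta+3$, both degrees are below $\Delta$, and Corollary~\ref{cor}$(c)$ forces $d(x)+d(y)\in\{\Delta+2,\Delta+3\}$; your disposal of the case $d(x)+d(y)=\Delta+2$ via Corollary~\ref{cor}$(d)$ and Lemma~\ref{zhang}$(b)$ is correct and complete, because $v$ and $w$, being neighbors of $z\in N_G(\{x,y\})$, land either in $N_G(\{x,y\})$ or in $N_G(N_G(\{x,y\}))\setminus\{x,y\}$ and so must be $\Delta$-vertices.

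The gap is that the case $d(x)+d(y)=\Delta+3$ --- which is the only case the present paper actually needs, since condition $(f)$ in the proof of Theorem~\ref{th1} has $d(y)=d(z)=5$ and $\Delta=7$ --- is not proved but only planned. Your counting observation (exactly one color appears at both $x$ and $y$ after uncoloring $xy$) is correct, but everything after it is a description of what a proof would have to do: you do not specify the fans, do not identify the $(\alpha,\beta)$-chains, and do not verify that the rotations and chain swaps are compatible; you say yourself that the bookkeeping remains. That bookkeeping \emph{is} the proof --- in \cite{SZ2001} this adjacency lemma is established by a lengthy case analysis of exactly the kind you foresee, and there is no shortcut through the structural lemmas already quoted in this paper (Corollary~\ref{cor} and Lemma~\ref{zhang} constrain only degrees of vertices near $x$ and $y$, and say nothing that distinguishes the configuration with the triangle $xyz$ and the two low-degree neighbors $v,w$ of $z$ from configurations that genuinely occur in critical graphs). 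As it stands, the proposal establishes the easy half of the lemma and correctly diagnoses where the difficulty lies, but it does not prove the statement.
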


\begin{lemma}{\rm \cite{MW2002}} \label{Miao2002}
If $G$ is a $\Delta$-critical graph with $n$ vertices, where $\Delta\geq 8$. Then $|E(G)|\geq 3(n+\Delta-8)$.
\end{lemma}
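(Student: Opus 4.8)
The plan is to reformulate the claim as $\sum_{v\in V(G)}(d(v)-6)\ge 6(\Delta-8)$ and prove it by discharging, assigning to each vertex $v$ the initial charge $ch(v)=d(v)-6$, so that $\sum_v ch(v)=2|E(G)|-6n$. The whole argument rests on three facts about a $\Delta$-critical graph $G$ with $\Delta\ge 8$, all already available above: (I) every edge $xy$ satisfies $d(x)+d(y)\ge\Delta+2$ (Corollary~\ref{cor}(c)), so a vertex of degree $j\le 5$ has all of its neighbours of degree $\ge\Delta+2-j\ge\Delta-3$ --- in particular, unless $\Delta$ is very small, all of its neighbours already carry strictly positive charge; (II) Vizing's Adjacency Lemma (Lemma~\ref{Vizing}) controls both sides: if $u$ has a neighbour of degree $k<\Delta$ then at least $\Delta-k+1$ of $u$'s neighbours have degree $\Delta$, so a minor vertex has many maximal neighbours and a maximal vertex has few non-maximal neighbours; and (III) if an edge $xy$ is \emph{tight}, $d(x)+d(y)=\Delta+2$ with $d(x),d(y)<\Delta$, then by Lemma~\ref{zhang}(b) every vertex of the second neighbourhood $N_G(N_G(\{x,y\}))\setminus\{x,y\}$ is a $\Delta$-vertex.

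The discharging rule would send charge from the high-degree vertices (degree at least $7$, which carry charge at least $1$) to the minor vertices (degree at most $5$), a minor $j$-vertex pulling its deficit $6-j$ from its high-degree neighbours with suitable weights. The target is $ch'(v)\ge 0$ for every vertex of degree at most $\Delta-1$, and $ch'(u)\ge\Delta-8$ for every $\Delta$-vertex. The ``pull'' side is straightforward from (I) and (II): a $2$-vertex has both neighbours of degree $\Delta$; a $3$-vertex has three neighbours of degree $\ge\Delta-1\ge 7$; a $4$- or $5$-vertex has all neighbours of degree $\ge\Delta-3$ and, applying Vizing's Adjacency Lemma across an edge to a non-maximal neighbour, at least two neighbours of degree $\Delta$ --- so every minor vertex can collect its deficit. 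The delicate side is showing no high-degree vertex is over-drained. The crude bound from (II) --- a $\Delta$-vertex whose smallest neighbour has degree $k$ has at most $k-1$ minor neighbours, each of degree $\ge k$, hence sends out at most roughly $(k-1)\cdot\frac{6-k}{2}$ --- suffices for $k\in\{2,5\}$ but gives $3$ (not $\le 2$) for $k\in\{3,4\}$. This is exactly where (III) is used: if such a $\Delta$-vertex $u$ has a minor neighbour $v$ lying on a tight edge $vw$, then $N_G(u)\subseteq N_G(N_G(\{v,w\}))$, so every neighbour of $u$ except $v$ and $w$ is a $\Delta$-vertex, whence $u$ has only one minor neighbour and sends out far less than $2$; and if no minor neighbour of $u$ lies on a tight edge, then by (I) each such neighbour has enough maximal (and other high-degree) neighbours to dilute its demand, again keeping $u$'s output at most $2$. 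Grinding through the resulting case distinction --- organised by the value of $\Delta$ (small $\Delta$ forcing more edges to be tight) and by the degree multiset of $u$'s minor neighbours --- is where essentially all the work lies, and I expect this to be the main obstacle.

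With the discharging complete we have $ch'(v)\ge 0$ everywhere and $ch'(u)\ge\Delta-8$ at every $\Delta$-vertex, so $\sum_v ch(v)=\sum_v ch'(v)\ge(\Delta-8)\,n_\Delta$, and it remains only to see $n_\Delta\ge 6$. By Corollary~\ref{cor}(b) every vertex has at least two $\Delta$-neighbours, so $\Delta\,n_\Delta=\sum_{d(u)=\Delta}d(u)\ge 2n$, giving $n_\Delta\ge 2n/\Delta\ge 6$ whenever $n\ge 3\Delta$. When $n<3\Delta$ the graph is small, and I would finish by a direct estimate using $\delta(G)\ge 2$, the edge-sum bound (I), and Vizing's Adjacency Lemma (a vertex of degree $\le\Delta-5$ would already hand six $\Delta$-neighbours to one of its neighbours), checking $2|E(G)|\ge 6n+6(\Delta-8)$ outright. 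Combining the two cases yields $2|E(G)|-6n\ge 6(\Delta-8)$, i.e.\ $|E(G)|\ge 3(n+\Delta-8)$. Besides the $k\in\{3,4\}$ discharging subcases, the other place I expect to need care is making this small-$n$ endgame airtight.
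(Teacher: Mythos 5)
First, a point of comparison: the paper does not prove Lemma~\ref{Miao2002} at all --- it is quoted from Miao and Wu \cite{MW2002} --- so there is no in-paper argument to measure yours against; what follows assesses your outline on its own terms. Your reduction to $\sum_{v}(d(v)-6)\ge 6(\Delta-8)$ and your choice of tools (Corollary~\ref{cor}(c), Vizing's Adjacency Lemma, Lemma~\ref{zhang}(b)) are the right ingredients, but as written this is a plan rather than a proof, and both places you flag as ``where the work lies'' are genuine gaps, not routine verifications.

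Concretely: (1) you never fix the rule by which a minor vertex distributes its deficit (equally over its $\Delta$-neighbours, or over all neighbours of degree at least $7$), and the two choices behave differently exactly in your deferred cases. For $\Delta=8$, if $4$-vertices pull only from $\Delta$-neighbours, an $8$-vertex adjacent to three $4$-vertices, none on a tight edge, is charged $3\cdot\frac{2}{2}=3>2$; if instead they pull from all degree-$\ge 7$ neighbours, one must separately check that a $7$-vertex (initial charge $1$) adjacent to two such $4$-vertices stays nonnegative, which needs the extra observation that a $4$-vertex with a $7$-neighbour has all four neighbours of degree at least $7$. You also never address the vertices of degree strictly between $6$ and $\Delta$, which both send charge and must not go negative. (2) The endgame needs, in effect, six $\Delta$-vertices, but $n_\Delta\ge 2n/\Delta$ only gives $n_\Delta\ge 3$ when $n$ is near $\Delta+1$, nothing cited in this paper yields $n_\Delta\ge 6$, and your ``direct estimate'' for $n<3\Delta$ is not carried out: in the residual case $\delta(G)\ge\Delta-4$ the obvious bound $2|E(G)|\ge(\Delta-4)n$ falls short of the target $6n+6(\Delta-8)$ for every $\Delta\le 11$, so for $\Delta\in\{9,10,11\}$ this subcase genuinely requires its own argument (and pushes you back into the discharging). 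Until the rule is pinned down, all degree classes are checked, and the $n<3\Delta$ case is closed, the lemma is not proved.
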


\begin{proof}[Proof of Theorem $\ref{th1}$] Suppose, to the contrary, that $H$ is a $\Delta$-critical $K_5$-minor free graph with $\Delta\geq 7$. In \cite{Mader1998}, Mader proved that each $K_5$-minor graph $G$ with $n$ vertices has at most $3n-6$ edges. Therefore, if $\Delta\geq 8$, it is a contradiction to Lemma \ref{Miao2002}. Assume that $\Delta=7$ in what follows.

By Corollary \ref{cor}, we have that $\delta(H)\geq 2$, every vertex of $H$ is adjacent to at most one $2$-vertex and all neighbors of any $2$-vertex in $H$ are 7-vertices. We construct a new graph $H'$ from $H$ by contracting all 2-vertices and deleting all contracted multiple edges. Then $H'$ is also a $K_5$-minor free graph and we have
\begin{description}
  \item[$(a)$] $\delta(H')\geq 3$ and $d_{H'}(v)\geq d_H(v)-1$ for any $v\in V(H')$.
\end{description}
By Corollary \ref{cor}$(d)$ and Lemma \ref{zhang}, if a $7$-vertex $v$ of $H$ is adjacent to $2$-vertex, then all neighbors of $v$ are still $7$-vertices in $H'$. So we have
\begin{description}
  \item[$(b)$] \textit{for any $xy\in E(H')$, $x$ is adjacent to at least $(8-d_{H'}(y))$ $7$-vertices of $H'$ other than $y$}.
  \item[$(c)$] \textit{for any edge $xy\in E(H')$, if $d_{H'}(x)<7$, $d_{H'}(y)<7$ and $d_{H'}(x)+d_{H'}(y)=9$, then every vertex of $N_{H'}(N_{H'}(\{x,y\}))\backslash\{x,y\}$ is a $7$-vertex of $H'$.}
\end{description}
By Lemma \ref{k5}, $H'$ contains a vertex $x$ satisfying one of the following conditions:
\begin{description}
  \item[$(d)$] \textit{$x$ is adjacent to two vertices $y, z$ such that $d_{H'}(z)< 16- d_{H'}(x)- d_{H'}(y)$ and $xz$ is incident with at least $d_{H'}(x)+d_{H'}(y)-9$ triangles not containing $y$};
  \item[$(e)$] \textit{$x$ is adjacent to four vertices $v, w, y, z$ such that $d_{H'}(w)\leq 5$, $d_{H'}(z)=5$, $d_{H'}(y)=5$, and $vwx$ and $xyz$ are triangles};
  \item[$(f)$] \textit{$x$ is adjacent to four vertices $v, w, y, z$ such that $d_{H'}(v)< 7$, $d_{H'}(w)< 7$, $d_{H'}(y)+d_{H'}(z)\leq 10$, $d_{H'}(y)\geq4$, $d_{H'}(z)\geq4$ and $xyz$ is a triangle}.
\end{description}
However, such vertex $x$ does not exist according to Lemma \ref{zhao2}-\ref{zhao4}, a contradiction. We complete the proof of Theorem \ref{th1}.
\end{proof}

\section{Concluding remarks}

In this paper, we showed that every $K_5$-minor free graph with maximum degree $\Delta\geq 7$ is class 1. The idea of the proof of the main theorem can be extended into other colorings of $K_5$-minor free graphs. It is natural to investigate other graph coloring problems for $K_5$-minor free graphs. Furthermore, for edge coloring problem, we know that there exist graphs of class 2 with maximum degree at most five. As an extension for planar graphs with maximum degree six, we propose the following conjecture for $K_5$-minor free graphs.

\begin{conjecture} Let $G$ be a $K_5$-minor free graph with maximum degree $\Delta=6$. Then $G$ is class $1$.
\end{conjecture}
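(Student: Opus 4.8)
The plan is to mimic the strategy used for Theorem \ref{th1}: assume a minimal $6$-critical $K_5$-minor-free counterexample $H$, use the tree-decomposition of Lemma \ref{K5} to reduce the problem to the planar parts, and then derive a contradiction from a discharging argument on a planar part. First I would contract the $2$-vertices (exactly as in the proof of Theorem \ref{th1}) to obtain an auxiliary $K_5$-minor-free graph $H'$ with $\delta(H')\ge 3$ and with Vizing-type adjacency constraints inherited from Lemma \ref{Vizing} and Corollaries \ref{cor} and \ref{zhang}. Following the pattern of Lemma \ref{k5}, the tree-decomposition step then lets one pass from $H'$ to a single planar part: the Wagner graph cannot occur (its maximum degree is $3$, contradicting the adjacency constraints), a leaf part with nontrivial interior yields a reducible configuration via a planar analogue of Lemma \ref{l1}, and a leaf part of order $4$ forces a separating triangle that can be contracted, contradicting minimality. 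This portion is essentially routine given the $\Delta=7$ template, so the entire difficulty concentrates in the planar core.

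The core task is therefore to prove the $\Delta=6$ analogue of Lemma \ref{l1}: a planar graph satisfying the appropriate adjacency hypotheses must contain one of a prescribed list of reducible configurations. Two ingredients are needed. First, one must assemble a sufficient family of reducible configurations for $6$-critical graphs, i.e. the $\Delta=6$ specialisations of Lemmas \ref{zhao2}--\ref{zhao4} together with any additional configurations, all established by Kempe-chain and coloring-extension arguments. Note that at $\Delta=6$ the constants in Lemmas \ref{zhao2}--\ref{zhao4} shrink (the thresholds $2\Delta-d(x)-d(y)+2$, $d(x)+d(y)-\Delta-2$, and $\Delta+3$ all tighten relative to larger $\Delta$), so the configurations they forbid become correspondingly weaker and will not suffice on their own; genuinely new reducible configurations must be found. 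Second, one must design a discharging scheme — an Euler-formula charge $ch(v)=d(v)-6$, $ch(f)=2d(f)-6$ as in Lemma \ref{l1}, together with redistribution rules — whose only surviving negative-charge configurations are the reducible ones, yielding the contradiction $\sum_{x} ch'(x) < 0$.

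The main obstacle is structural and, I expect, decisive. When $\Delta=6$ every $6$-vertex has initial charge $ch(v)=d(v)-6=0$, so, in sharp contrast with the $\Delta=7$ case, there are \emph{no} high-degree vertices carrying positive charge to donate to the deficient $2$-, $3$-, $4$-, and $5$-vertices. All positive charge must instead come from the faces, through rules of the kind in \textbf{R1.2}, while the vertex-to-vertex transfers of \textbf{R2.1}--\textbf{R2.5} lose their reservoir entirely. Consequently the discharging has far less slack, and closing the final inequality $ch'(v)\ge 0$ for low-degree vertices embedded in triangles appears to demand reducible configurations strictly stronger than any currently known. Indeed, since every planar graph is $K_5$-minor-free, a proof of the conjecture would in particular settle the long-standing open problem noted in the introduction of whether planar graphs with $\Delta=6$ are class $1$; hence no genuinely easier route through the $K_5$-minor-free structure can be expected, and the essential new content must be a richer catalogue of reducible configurations — or a potential function replacing the naive Euler charge — for the planar $\Delta=6$ setting.
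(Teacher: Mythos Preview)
The statement is a \emph{conjecture} in the paper, not a theorem; the paper offers no proof and explicitly presents it as an open problem in the concluding remarks. So there is nothing to compare your attempt against.

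Your proposal is not a proof either, and you are candid about this: you outline the $\Delta=7$ template, explain where it breaks, and conclude that genuinely new reducible configurations (or a new potential function) would be required. That analysis is correct. In particular, your key observation is sound: since every planar graph is $K_5$-minor free, the conjecture immediately implies that every planar graph with $\Delta=6$ is class~1, which is the well-known open problem flagged in the paper's introduction. Hence the conjecture is at least as hard as that problem, and no amount of tree-decomposition bookkeeping can circumvent the planar core. Your diagnosis of the discharging obstruction --- that with $ch(v)=d(v)-6$ the $\Delta$-vertices carry zero charge, so the vertex-to-vertex transfers of the $\Delta=7$ argument lose their reservoir --- is also accurate and is exactly why the planar $\Delta=6$ case has resisted the methods of \cite{SZ2001,Zhang2000}.

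In short: there is no gap to name because you have not claimed a proof; your write-up is a reasonable account of why the conjecture is open, and it aligns with the paper's own treatment of the statement as unresolved.
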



\begin{thebibliography}{99}
\upshape
\bibitem{B} J. A. Bondy and U. S. R. Murty, Graph Theory, Springer, 2008.


\bibitem{Gupta1967} R. G. Gupta. Studies in the theory of graphs, Ph. D. Thesis, Tata Institute of Fundamental Research, Bombay, 1967.

\bibitem{Holyer1981} I. Holyer, The NP-completeness of some edge-partition problems, SIAM J. Comput., 10(4)(1981) 713-717.

\bibitem{Mader1998} W. Mader, $3n-5$ edges do force a subdivision of $K_5$, Combinatorica, 18(1998) 569-595.

\bibitem{MW2002} L. Y. Miao and J. L. Wu, Edge-coloring critical graphs with high degree, Discrete Mathematics, 257(2002) 169-172.

\bibitem{SZ2001} D. P. Sanders and Y. Zhao, Planar graphs of maximum degree seven are class 1, J. Combin. Theory Ser B, 83(2001), 202-212.

\bibitem{Vizing1964} V. G. Vizing, On an estimate of the chromatic index of a $p$-graph, Metody Diskret. Analiz, 3(1964) 25-30.

\bibitem{Vizing1965} V. G. Vizing,  Critical graphs with given chromatic class, Diskret. Analiz No., 5(1965) 9-17.

\bibitem{Vizing1968} V. G. Vizing, Some unsolved problems in graph theory, Uspekhi Matematicheskikh Nauk, 23(1968) 117-134 (in Russian); English translation in Russian Mathematical Surveys, 23(1968) 125-141.

\bibitem{W1937} K. Wagner, \"{U}ber eine Eigenschaft der ebenen Komplexe, Mathematische Annalen, 114(1937) 570-590.


\bibitem{Zhang2000} L. M. Zhang, Every planar graph with maximum degree 7 is of class 1, Graphs and Combinatorics, 16(2000) 467-495.

\bibitem{ZW2018} W. W. Zhang, J. L. Wu, Edge coloring of planar graphs without adjacent 7-cycles, Theoret. Comput. Sci., 739(2018) 59-64.


\end{thebibliography}

\end{document}